
\documentclass[12pt,a4paper]{article}%
\usepackage[utf8]{inputenc}
\usepackage{amsmath}
\usepackage{amsfonts}
\usepackage{amssymb}
\usepackage{graphicx}
\usepackage[space]{grffile}%
\setcounter{MaxMatrixCols}{30}
\providecommand{\U}[1]{\protect\rule{.1in}{.1in}}
\newtheorem{theorem}{Theorem}

\newtheorem{conjecture}[theorem]{Conjecture}
\newtheorem{corollary}[theorem]{Corollary}

\newtheorem{lemma}[theorem]{Lemma}

\newtheorem{proposition}[theorem]{Proposition}

\newenvironment{proof}[1][Proof]{\noindent\textbf{#1.} }{\ \hfill \rule{0.5em}{0.5em}\bigskip}
\graphicspath{{D:/Dropbox/Riste-Sedlar/MetricDim/Paper 7/Slike/}}

\textwidth=16cm
\hoffset=-1.2cm
\voffset=-2.cm
\textheight=23cm
\begin{document}

\title{Remarks on the vertex and the edge metric dimension of $2$-connected graphs}
\author{Martin Knor$^{1}$,
\and Jelena Sedlar$^{2,4}$,\\Riste \v{S}krekovski$^{3,4}$ \\[0.3cm] {\small $^{1}$ \textit{Slovak University of Technology in Bratislava,
Bratislava, Slovakia}}\\[0.3cm] {\small $^{2}$ \textit{University of Split, Faculty of civil
engineering, architecture and geodesy, Croatia}}\\[0.1cm] {\small $^{3}$ \textit{University of Ljubljana, FMF, 1000 Ljubljana,
Slovenia }}\\[0.1cm] {\small $^{4}$ \textit{Faculty of Information Studies, 8000 Novo
Mesto, Slovenia }}\\[0.1cm] }
\maketitle

\begin{abstract}
The vertex (resp. edge) metric dimension of a graph $G$ is the size of a
smallest vertex set in $G$ which distinguishes all pairs of vertices (resp.
edges) in $G$ and it is denoted by $\mathrm{dim}(G)$ (resp. $\mathrm{edim}%
(G)$). The upper bounds $\mathrm{dim}(G)\leq2c(G)-1$ and $\mathrm{edim}%
(G)\leq2c(G)-1,$ where $c(G)$ denotes the cyclomatic number of $G$, were
established to hold for cacti without leaves distinct from cycles, and
moreover all leafless cacti which attain the bounds were characterized. It was
further conjectured that the same bounds hold for general connected graphs
without leaves and this conjecture was supported by showing that the problem
reduces to $2$-connected graphs. In this paper we focus on $\Theta$-graphs, as
the most simple $2$-connected graphs distinct from cycle, and show that the
the upper bound $2c(G)-1$ holds for both metric dimensions of $\Theta$-graphs
and we characterize all $\Theta$-graphs for which the bound is attained. We
conclude by conjecturing that there are no other extremal graphs for the bound
$2c(G)-1$ in the class of leafless graphs besides already known extremal cacti
and extremal $\Theta$-graphs mentioned here.

\end{abstract}

\section{Introduction}

In this paper we assume that all graphs are simple and connected, unless we
explicitly say otherwise, and we consider distances in such graphs. Let $G$ be
a graph with the set of vertices $V(G)$ and the set of edges $E(G).$ The
\emph{distance} $d_{G}(u,v)$ between vertices $u,v\in V(G)$ is the length of a
shortest path in $G$ connecting vertices $u$ and $v.$ The distance
$d_{G}(u,e)$ between a vertex $u\in V(G)$ and an edge $e=vw\in E(G)$ is
defined by $d_{G}(u,e)=\min\{d_{G}(u,v),d_{G}(u,w)\}.$ When no confusion
arises from that, we use abbreviated notation $d(u,v)$ and $d(u,e).$ We say
that a pair $x$ and $x^{\prime}$ of vertices from $V(G)$ (resp. of edges from
$E(G)$) is \emph{distinguished} by a vertex $s\in V(G)$ if $d(s,x)\not =%
d(s,x^{\prime})$. A set $S$ is a \emph{vertex} (resp. an \emph{edge})
\emph{metric generator} if every pair $x$ and $x^{\prime}$ of vertices from
$V(G)$ (resp. of edges from $E(G)$) is distinguished by a vertex $s\in S.$ The
size of a smallest vertex (resp. edge) metric generator in $G$ is called the
vertex (resp. the edge) metric dimension of $G$ and it is denoted by
$\mathrm{dim}(G)$ (resp. $\mathrm{edim}(G)$). The \emph{cyclomatic number}
$c(G)$ of a graph $G$ is defined by $c(G)=\left\vert E(G)\right\vert
-\left\vert V(G)\right\vert +1.$ A $\Theta$\emph{-graph }is any graph $G$ with
precisely two vertices of degree $3$ and all other vertices of degree $2$.

The concept of vertex metric dimension was introduced related to the study of
navigation systems \cite{HararyVertex} and the landmarks in networks
\cite{KhullerVertex}. Various aspects of this metric dimension have been
studied since it was first introduced \cite{BuczkowskiVertex, ChartrandVertex,
UnicyclicDudenko2, UnicyclicDudenko4, FehrVertex, KleinVertex, MelterVertex,
UnicyclicPoisson}. As it was noticed recently in \cite{TratnikEdge}, there are
graphs in which none of the smallest vertex metric generators distinguishes
all pairs of edges. This motivated the introduction of a new variant of metric
dimension, namely the edge metric dimension. Even though it is newer than the
vertex metric dimension, the edge metric dimension also atracted interest
\cite{GenesonEdge, HuangApproximationEdge, Klavzar, Knor, PeterinEdge,
ZhangGaoEdge, ZhuEdge, ZubrilinaEdge}.A nice survey of the topic of metric
dimension is given in \cite{YeroSurvey}.

Particularly relevant for this paper is the line of investigation from papers
\cite{SedSkreExtensionCactus, SedSkreLeaflessCacti} where graphs with edge
disjoint cycles, also called \emph{cactus graphs} or \emph{cacti}, were
studied. In \cite{SedSkreExtensionCactus} the upper bounds $\mathrm{dim}%
(G)\leq L(G)+2c(G)$ and $\mathrm{edim}(G)\leq L(G)+2c(G)$ are established to
hold for all cacti, where $L(G)$ is an invariant that depends on the presence
of leaves in $G.$ Moreover, the following conjectures were proposed for
general graphs.

\begin{conjecture}
Let $G$ be a connected graph. Then, $\mathrm{dim}(G)\leq L(G)+2c(G).$
\end{conjecture}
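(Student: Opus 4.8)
Since it is already known \cite{SedSkreExtensionCactus} that the conjecture reduces to the $2$-connected case, with the term $L(G)$ accounting for the forced landmarks on pendant paths, the whole problem comes down to proving $\mathrm{dim}(G)\le 2c(G)$ for every $2$-connected graph $G$; recall $L(G)=0$ here, since such a $G$ has minimum degree at least $2$. So from now on assume $G$ is $2$-connected and fix an \emph{open ear decomposition} $G=D_0\cup P_1\cup\cdots\cup P_k$, where $D_0$ is a cycle, each $P_i$ is an ear attached to $D_{i-1}$ at two vertices, and $k=c(G)-1$ (each ear raises the cyclomatic number by one).

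The plan is to build a metric generator $S$ with a budget of two vertices for the base cycle $D_0$ and two further vertices for each ear $P_i$, so that $|S|\le 2(k+1)=2c(G)$, each pair being selected among the endpoints and the interior vertices of the corresponding ear. One would proceed by induction on $i$ with the invariant that the landmarks chosen on $D_0,P_1,\dots,P_i$ already form a metric generator of $D_i$. The base case $i=0$ is a cycle, where two suitably placed vertices resolve everything. In the inductive step, attaching $P_i$ can create unresolved pairs of only two kinds: pairs one of whose members is an interior vertex of $P_i$, and pairs $\{u,v\}\subseteq V(D_{i-1})$ whose distance has dropped because the new ear provides a shortcut between them; the hope is that placing the two new landmarks at (or adjacent to) the two attachment vertices of $P_i$ — or, when $P_i$ is long, at one attachment vertex and near the midpoint of $P_i$, according to parity — simultaneously kills all pairs of both kinds.

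The crux, and the reason this remains a conjecture for $k\ge 2$, is that an ear does not perturb the metric only locally: $P_i$ can shorten distances between vertices lying far from it, so the inductive invariant is not inherited for free, and a pair comfortably resolved at stage $i-1$ may break at a later stage. Making the induction go through seems to require a global structural lemma — for a fixed pair $\{u,v\}$, the set of vertices failing to resolve it should be confined to a union of geodesic intervals whose number is controlled by $c(G)$ — together with a disciplined choice of the ear decomposition (say, one in which every ear is a geodesic between its endpoints, or is nested compatibly with the cycle space) so that the two landmarks assigned to each ear can always be chosen outside all exceptional intervals at once. The $\Theta$-graph case treated here is exactly the first instance ($k=1$) in which this interaction is still simple enough to handle by hand — which is why it is the natural test case — and extending the bookkeeping to $k\ge 2$, where ears may attach to interior vertices of previously added ears and several shortcuts compound, is where the real work lies. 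The edge-metric version should follow the same template, with ``interior vertex of $P_i$'' replaced by ``edge of $P_i$'' throughout.
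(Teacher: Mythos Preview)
This statement is Conjecture~1 in the paper and is presented there as an \emph{open problem}; the paper does not prove it, nor does it claim to. What the paper actually proves concerns only $\Theta$-graphs (the case $c(G)=2$) and the related Conjectures~\ref{Con_dim_leaves} and~\ref{Con_edim_leaves}. So there is no ``paper's own proof'' to compare your proposal against. Your write-up is, quite appropriately, not a proof either: it is a strategy sketch that explicitly names its own gap (``the reason this remains a conjecture for $k\ge 2$''), namely that adding an ear perturbs the metric non-locally, so the natural ear-decomposition induction does not carry the invariant forward for free. That diagnosis is accurate, but it means your proposal does not advance beyond what the paper already acknowledges as open.

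One factual correction to your premise: the reduction to the $2$-connected case that you cite as ``already known'' is asserted in this paper only for Conjectures~\ref{Con_dim_leaves} and~\ref{Con_edim_leaves} (the leafless bound $2c(G)-1$), with reference to \cite{SedSkreLeaflessCacti}, not for Conjecture~1 with the $L(G)$ term. The paper \cite{SedSkreExtensionCactus} is credited only with establishing the bound for cacti and proposing the conjecture, not with a reduction to $2$-connected graphs. So the very first step of your outline rests on a claim whose status is at best unclear from the sources at hand.
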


\begin{conjecture}
Let $G$ be a connected graph. Then, $\mathrm{edim}(G)\leq L(G)+2c(G).$
\end{conjecture}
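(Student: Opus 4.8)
The plan is to transplant to the edge setting the two–stage reduction that underlies the known vertex result, and then to establish the bound on a $2$-connected core by induction on the cyclomatic number. In the first stage one removes the ``leaf part'' of $G$: exactly as in \cite{SedSkreExtensionCactus}, a careful choice of vertices near the leaves and the branch vertices carrying them accounts for the term $L(G)$, and the thing to verify is that such leaf–vertices together with an edge metric generator of the leafless skeleton resolve every edge pair of $G$ — two edges incident to (or hanging off) a common cut vertex are resolved by any skeleton vertex because their distances to it differ by exactly the common distance to that cut vertex. Combined with the reduction to $2$-connected graphs mentioned above, this should leave us with proving $\mathrm{edim}(H)\le 2c(H)-1$ for every $2$-connected $H$ that is not a cycle, the cycle case being immediate since $\mathrm{edim}(C_n)=2\le L(C_n)+2$.

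For the $2$-connected core I would fix an open ear decomposition $H=C_0\cup P_1\cup\dots\cup P_k$ with $k=c(H)-1$, put $H_i=C_0\cup P_1\cup\dots\cup P_i$, and induct on $i$. The base object $H_1=C_0\cup P_1$ is a $\Theta$-graph, so the concrete first milestone is exactly the $\Theta$-graph case: with two branch vertices $u,v$ joined by three internally disjoint paths, take three well–chosen vertices (for instance a branch vertex together with a suitable vertex on each of two of the paths) and use that distances from a fixed vertex are monotone along a path and unimodal along a cycle to show all edge pairs are resolved; one then determines for which triples of path lengths no two–vertex generator exists. The inductive step adds at most two vertices on the new ear $P_{i+1}$, and the count closes because the first ear costs only $+1$ over the two vertices of $C_0$ (this is precisely the $\Theta$-graph bound $3=2c-1$) while each further ear costs at most $+2$, giving $2+1+2(k-1)=2c(H)-1$ (equivalently, put two vertices on every ear for a total of $2c(H)$ and argue that one is always redundant).

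The real obstacle is the correctness of the inductive step, not its bookkeeping: attaching $P_{i+1}$ to $H_i$ can create shortcuts that change $d(s,e)$ simultaneously for many vertices $s$ and edges $e$ already present, so a generator of $H_i$ need not remain one in $H_{i+1}$ even after two vertices are added on $P_{i+1}$. Pushing the step through seems to require classifying how such a shortcut can destroy resolution — essentially a case analysis on the length of $P_{i+1}$ against the two $H_i$-distances between its attachment points, together with the locations of the old generator vertices — and then showing that the two new vertices repair every newly confused pair while not themselves creating confusion among the edges of $P_{i+1}$. Controlling this interaction once several ears are present, and in particular bounding how many extremal configurations can force equality in $2c(H)-1$, is where I expect the bulk of the work to lie; this is presumably why the present paper first secures the case $k=1$ ($\Theta$-graphs) and leaves the general statement as a conjecture.
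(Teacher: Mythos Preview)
The statement you are trying to prove is Conjecture~2 in the paper; the paper does \emph{not} prove it and offers no argument for it beyond citing that it holds for cacti. There is therefore no ``paper's own proof'' to compare against. What the paper actually does is work on the narrower Conjecture~\ref{Con_edim_leaves} (the leafless case), and even there it only settles the $\Theta$-graph subcase.

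Your proposal is not a proof either, and you say so yourself: the ear-decomposition induction hinges entirely on the step ``attach $P_{i+1}$ and repair the generator with at most two new vertices'', and you correctly identify that attaching an ear can shorten many distances inside $H_i$ and destroy resolution globally, not just locally on the new ear. This is not a technicality that a case analysis on the length of $P_{i+1}$ versus the two attachment distances will dispatch; after several ears the interaction is genuinely global, and there is no reason the two new vertices should simultaneously fix every pair whose resolving vertex lost its shortest path. Nothing in your sketch supplies a mechanism for this, so at present the inductive step is an assertion, not an argument.

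Two smaller points. First, your leaf-removal paragraph is too quick: the sentence ``two edges incident to (or hanging off) a common cut vertex are resolved by any skeleton vertex because their distances to it differ by exactly the common distance to that cut vertex'' is false as stated---two edges in different pendant trees at the same depth from the cut vertex are \emph{not} distinguished by any skeleton vertex, which is exactly why the $L(G)$ leaf vertices are needed; you should say what those leaf vertices are and why they suffice. Second, the base case ``$H_1$ is a $\Theta$-graph, so $\mathrm{edim}(H_1)\le 3$'' is precisely what the present paper establishes (Lemma~\ref{Lemma_edgeGenerators} together with Lemma~\ref{Lemma_edim2}); you may cite it, but be aware that the paper's proof of this base case is already a nontrivial case analysis, which is further evidence that the general inductive step will not be short.
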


Since the attainment of the bound in the class of cactus graphs depends on the
presence of leaves, leafless cacti and general graphs without leaves were
further investigated in \cite{SedSkreLeaflessCacti}. It was established that
for leafless cacti the upper bound decreases to $2c(G)-1$, and all cacti
attaining this bound were characterized. It was further conjectured that the
same decreased upper bound holds for all leafless graphs, i.e., the following
two conjectures were posed.

\begin{conjecture}
\label{Con_dim_leaves}Let $G\not =C_{n}$ be a graph with minimum degree
$\delta(G)\geq2$. Then, $\mathrm{dim}(G)\leq2c(G)-1.$
\end{conjecture}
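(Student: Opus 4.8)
The plan is to follow the template that worked for leafless cacti: first reduce the conjecture to $2$-connected graphs, and then induct on the cyclomatic number. By the reduction announced in the introduction it suffices to prove $\mathrm{dim}(G)\le 2c(G)-1$ when $G$ is $2$-connected and $G\neq C_n$, so that $c(G)\ge 2$. Every $2$-connected graph admits an open ear decomposition $G=C_0\cup P_1\cup\cdots\cup P_{k}$, where $C_0$ is a cycle and each $P_i$ is a path whose endpoints lie in $C_0\cup P_1\cup\cdots\cup P_{i-1}$ but whose interior is new; then $c(G)=k+1$, and the induction runs on $k$. The base case $k=1$ is precisely the class of $\Theta$-graphs, where $c(G)=2$ and the claimed bound reads $\mathrm{dim}(G)\le 3$.

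For the base case I would argue directly. Write a $\Theta$-graph as $\Theta_{a,b,c}$ with branch vertices $x,y$ joined by internally disjoint paths $Q_1,Q_2,Q_3$ of edge-lengths $a\le b\le c$. The key point is that the two branch vertices already resolve almost every pair: the signature $(d(w,x),d(w,y))$ pins down the position of a vertex $w$ along its path up to the question of which of the three paths it lies on, and for each signature there are at most three candidates, which moreover are concentrated near $x$ or near $y$. A carefully chosen third landmark — typically an interior vertex of one of the longer paths, the exact choice being dictated by the parities of $a,b,c$ and by whether $a=b$, so the argument splits into a handful of cases — separates the remaining candidates, giving $\mathrm{dim}(\Theta_{a,b,c})\le 3$. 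The characterization of the extremal $\Theta$-graphs would then come from going back through the same case analysis and recording exactly the triples $(a,b,c)$ for which no two-element generator exists; for instance $\Theta_{2,2,2}=K_{2,3}$ has $\mathrm{dim}=3$ while $\Theta_{1,2,2}$ has $\mathrm{dim}=2$, and one must show the resulting list is complete.

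For the inductive step, assume $G'=C_0\cup\cdots\cup P_{k-1}$ has a metric generator $S'$ with $|S'|\le 2c(G')-1=2c(G)-3$, and let $P_k$ be the last ear, with endpoints $p$ and $q$. The natural attempt is to adjoin to $S'$ at most two new landmarks — say one near $p$ and one near $q$ on $P_k$, or two interior vertices of $P_k$ selected by a parity argument — and to show that the enlarged set resolves $G$. One then has to check two things: (i) pairs involving the new interior vertices of $P_k$ are resolved, which the added landmarks handle locally; and (ii) every pair already resolved by $S'$ in $G'$ remains resolved in $G$. Part (ii) is the crux and, to my mind, the main obstacle: adjoining $P_k$ can shorten distances, so a landmark $s\in S'$ some of whose shortest paths now pass through $P_k$ may fail to resolve a pair it used to resolve. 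Keeping these shortcuts under control is what makes the general statement difficult — I would expect one must choose the ear decomposition with care (always peeling off a shortest admissible ear, or one avoiding the landmarks) and, very likely, strengthen the induction hypothesis so that it carries structural information beyond the mere size of $S'$, such as which vertices $S'$ dominates distance-wise or that $S'$ contains prescribed vertices of the current outer structure. Designing a strengthened hypothesis that survives an arbitrary ear addition is the step I do not expect to go through routinely; this is why the statement is still a conjecture and why the paper establishes the $\Theta$-graph case as the first concrete instance.
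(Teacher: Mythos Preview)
The statement is a conjecture, and the paper does \emph{not} prove it; it only establishes the $\Theta$-graph case and records that the reduction to $2$-connected graphs was carried out elsewhere. So your outline is not to be compared against a proof of the full statement --- there is none --- but against the paper's treatment of the base case $k=1$.

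On that base case your sketch contains a genuine gap. Your plan is to take the two branch vertices $x,y$ as landmarks and add one more interior vertex $z$. This cannot work whenever two of the three paths have equal length. In $\Theta_{p,p,r}$, if $z$ is chosen on the path of length $r$ then the interior vertices $u_i$ and $v_i$ (same position on the two length-$p$ paths) satisfy $d(u_i,x)=d(v_i,x)$, $d(u_i,y)=d(v_i,y)$, and $d(u_i,z)=d(v_i,z)$, since every shortest path from either of them to $z$ passes through $x$ or $y$. If instead $z$ lies on one of the length-$p$ paths, the same symmetry obstruction appears between the other length-$p$ path and the length-$r$ path only when $p=r$, but that already kills the argument for $\Theta_{p,p,p}$, which is precisely one of the extremal families. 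So the set $\{x,y,z\}$ is never a metric generator for $\Theta_{p,p,p}$, and your ``signature $(d(w,x),d(w,y))$ plus one tiebreaker'' heuristic breaks down exactly on the graphs you most need to handle.

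The paper avoids this by abandoning the branch vertices altogether. It observes that each of the three cycles $C_{ij}$ formed by two of the paths is an \emph{isometric} subgraph of $G$; hence any set $S$ that meets every $C_{ij}$ in a pair of non-antipodal vertices is automatically a metric generator (a ``nice'' set). One then just exhibits, for every $\Theta_{p,p,p}$ and $\Theta_{p,p,p+2}$, a nice triple such as $\{u_i,v_1,w_1\}$ --- one interior vertex on each path, none of them a branch vertex. For the characterisation of the $\Theta$-graphs with $\mathrm{dim}=2$ the paper does a separate, fairly lengthy case analysis exhibiting explicit two-element generators, organised by the parities of $p,q,r$, rather than deducing it from the three-landmark argument.

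Your assessment of the inductive step is accurate: controlling how an added ear shortens distances to existing landmarks is the obstruction, and the paper makes no attempt at it.
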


\begin{conjecture}
\label{Con_edim_leaves}Let $G\not =C_{n}$ be a graph with minimum degree
$\delta(G)\geq2$. Then, $\mathrm{edim}(G)\leq2c(G)-1.$
\end{conjecture}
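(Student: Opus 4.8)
The plan is to prove the bound for $2$-connected graphs and then invoke the reduction to the $2$-connected case mentioned above, which deduces the general leafless statement from it. For a $2$-connected graph $G \neq C_n$ one has $c(G) \geq 2$, and I would argue by induction on $c(G)$. The base case $c(G) = 2$ is exactly the class of $\Theta$-graphs: a $2$-connected graph of cyclomatic number $2$ is made of three internally disjoint paths joining two branch vertices, so here $\mathrm{edim}(G) \leq 3 = 2c(G)-1$ is precisely the theorem established in this paper, which moreover provides the extremal characterization that seeds the induction.

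For the inductive step with $c(G) = k \geq 3$, I would fix a classical open ear decomposition $C, P_1, \dots, P_{k-1}$ of $G$ and set $G' = G - P_{k-1}$, where $P_{k-1}$ is the last ear. By the defining property of ear decompositions $G'$ is again $2$-connected, and $c(G') = k-1 \geq 2$; consequently $\delta(G') \geq 2$ and $G' \neq C_n$ hold automatically, so the induction hypothesis applies and yields an edge metric generator $S'$ of $G'$ with $|S'| \leq 2(k-1)-1 = 2k-3$. I would then adjoin at most two new landmarks $s_1, s_2$ placed on or near the ear $P_{k-1}$ and prove that $S = S' \cup \{s_1, s_2\}$, of size at most $2k-1$, is an edge metric generator of $G$. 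Since the new edges lie on a path, their distances to a landmark at one end of $P_{k-1}$ grow monotonically along it, which should separate the new edges from one another and, together with $S'$, from the old ones.

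The hard part will be that the edge metric dimension behaves badly under this surgery: attaching $P_{k-1}$ can create shortcuts in $G$, so two old edges $e, f$ distinguished by some $s \in S'$ within $G'$ may satisfy $d_G(s,e) = d_G(s,f)$ because a shortest path now passes through $P_{k-1}$. Thus $S'$ need not distinguish old pairs in $G$. The natural remedy is to select the ear decomposition so that its last ear is a \emph{non-shortcut}, i.e.\ its length is at least the $G'$-distance between its endpoints; then no distance inside $G'$ decreases, $S'$ retains its full distinguishing power, and only the new edges remain to be handled by $s_1, s_2$. Proving that an open ear decomposition with a non-shortcut last ear always exists — or, failing that, showing that the distance distortion is localized enough to be repaired by the two new landmarks (and possibly by relocating a bounded number of old ones) — is, I expect, the principal difficulty, and the place where genuinely new structural lemmas about ear decompositions of $2$-connected graphs will be required.

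Finally, I would keep track of the extra $-1$ in the bound and verify tightness. The saving of one landmark below $2c(G)$ is inherited from the base $\Theta$-graph, where $\mathrm{edim} \leq 3$ rather than $4$, and it is preserved provided that no inductive step ever spends more than two landmarks on its ear. Since the bound is attained, the construction must be consistent with the extremal families, so I would cross-check it against the already known extremal leafless cacti and the extremal $\Theta$-graphs identified in this paper, confirming both that the two-landmarks-per-ear budget is never exceeded on these graphs and that it cannot be beaten on others. A successful such induction, combined with the reduction to the $2$-connected case, would establish the conjecture.
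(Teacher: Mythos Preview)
The statement you are attempting to prove is a \emph{conjecture} in the paper, not a theorem; the paper does not prove it. What the paper actually establishes is only the base case $c(G)=2$ of your proposed induction (the $\Theta$-graphs, via Lemmas~\ref{Lemma_edgeGenerators} and~\ref{Lemma_edim2} and Theorem~\ref{thm:edim=2}), together with the reduction to the $2$-connected case quoted from~\cite{SedSkreLeaflessCacti}. So your proposal is not an alternative to the paper's argument but an attack on an open problem, and must be judged as such.

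As a research plan the ear-decomposition induction is natural, but it contains a genuine gap that you yourself flag: you do not prove that every $2$-connected graph admits an open ear decomposition whose last ear is a non-shortcut, nor do you supply a workaround when none exists. Without this the inductive step has no foundation. Even granting a non-shortcut last ear, your claim that two extra landmarks always suffice is not substantiated. Your observation that distances inside $G'$ are preserved is correct (triangle inequality), so $S'$ still separates old edges from one another in $G$; but you must also separate every \emph{new} ear-edge from every \emph{old} edge. An ear-edge $e$ and an old edge $f$ can have identical distances to all of $S'\subseteq V(G')$, and your two new landmarks $s_1,s_2$ placed on or near the ear need not distinguish them either, since $f$ may sit at the same distances from both ear endpoints as $e$ does. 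Handling these cross-pairs is where the real combinatorics lies, and nothing in the proposal addresses it. Until both issues are resolved this is a strategy outline, not a proof, and it goes well beyond anything claimed or proved in the paper.
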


To support these conjectures, it was established in
\cite{SedSkreLeaflessCacti} that they hold for all graphs with $\delta
(G)\geq3$ with the strict inequality. Moreover, additional results for graphs
with $\delta(G)=2$ were also established, but let us first define all involved notions.

A set $S\subseteq V(G)$ is called a \emph{vertex cut} if $G-S$ is not
connected or it is trivial. A vertex $v$ is called a \emph{cut vertex}, if
$S=\{v\}$ is a vertex cut. The \emph{(vertex) connectivity} of a graph $G$ is
the size of the smallest vertex cut in $G$ and we denote it by $\kappa(G).$ A
graph $G$ is said to be $k$\emph{-connected} if $\kappa(G)\geq k.$ Any maximal
$2$-connected subgraph of $G$ is called a \emph{block} of $G$. If a block
$G_{i}$ contains at least three vertices, then $G_{i}$ is said to be
\emph{non-trivial}.

In \cite{SedSkreLeaflessCacti} it was established that for $\delta(G)=2$ the
problem can be reduced to $2$-connected graphs, i.e., it was shown that if
Conjecture \ref{Con_dim_leaves} (resp. Conjecture \ref{Con_edim_leaves}) holds
for $2$-connected graphs, then it holds in general. Moreover, considering when
the upper bound is attained, the following claim was established.

\begin{lemma}
\label{Cor_attained} Let $G\not =C_{n}$ be a graph with $\delta(G)\geq2$. If
$\mathrm{dim}(G_{i})<2c(G_{i})-1$ (resp. $\mathrm{edim}(G_{i})<2c(G_{i})-1$)
for a block $G_{i}$ of $G$ distinct from a cycle or there exist two
vertex-disjoint non-trivial blocks $G_{j}$ and $G_{k}$ in $G$, then
$\mathrm{dim}(G)<2c(G)-1$ (resp. $\mathrm{edim}(G)<2c(G)-1$).
\end{lemma}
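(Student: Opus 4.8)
Working, as in the reduction of \cite{SedSkreLeaflessCacti} to $2$-connected graphs, under the standing assumption that $\mathrm{dim}(H)\le 2c(H)-1$ and $\mathrm{edim}(H)\le 2c(H)-1$ for every $2$-connected $H\ne C_n$, the plan is to re-run the block-decomposition construction behind that reduction and show that each of the two hypotheses yields a vertex (resp.\ edge) metric generator of $G$ of size at most $2c(G)-2$. We use throughout the additivity of the cyclomatic number over blocks, $c(G)=\sum_i c(G_i)$, in which a $K_2$-block contributes $0$, a cycle contributes $1$, and every other non-trivial block $G_i$ contributes $c(G_i)\ge 2$; in particular $2c(G)-1=2\sum_i c(G_i)-1$ with the sum over non-trivial blocks. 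Since $\delta(G)\ge 2$, every end block of $G$ is non-trivial, so if $G$ has a single non-trivial block then $G$ is $2$-connected and the statement is trivial: then the first condition is literally the conclusion and the second cannot occur. Hence we may assume that $G$ has at least two non-trivial blocks, and in the first case a non-cycle one among them.

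Recall that the construction produces a generator $S=\bigcup_iS_i$, where $S_i\subseteq V(G_i)$ is chosen in each non-trivial block so that $S_i$ together with the cut vertices of $G$ lying on $G_i$ resolves $G_i$ in the vertex (resp.\ edge) sense, the cut vertices serving as ``free'' landmarks because any actual landmark beyond a cut vertex sees the whole block through it. A non-cycle non-trivial block $G_i$ then charges at most $2c(G_i)-1$ to $|S|$, and at most $2c(G_i)-2$ precisely when $\mathrm{dim}(G_i)<2c(G_i)-1$ (resp.\ $\mathrm{edim}(G_i)<2c(G_i)-1$); a cycle block charges at most $2$, and strictly less once one of its cut vertices is supplied for free from the appropriate side, which happens for instance when that cut vertex is incident to a $K_2$-block or lies on the block-tree path toward another non-trivial block. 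Summing these charges gives $\sum_i|S_i|\le 2\sum_i c(G_i)-b=2c(G)-b$, where $b$ is the number of non-cycle non-trivial blocks; balancing this against $2c(G)-1$ is exactly what produces the reduction, and the lemma asserts that it can be tightened by one more unit.

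In the first case the designated non-cycle block charges $2c(G_i)-2$ rather than $2c(G_i)-1$, so $\sum_i|S_i|\le 2c(G)-b-1\le 2c(G)-2$ because $b\ge 1$; the same computation applies to $\mathrm{edim}$. In the second case, if $b\ge 2$ then already $\sum_i|S_i|\le 2c(G)-b\le 2c(G)-2$. If $b\le 1$, then the two vertex-disjoint non-trivial blocks $G_j,G_k$ cannot both sit at one cut vertex, so they are joined by a non-empty chain of blocks, and at least one cycle block among $G_j$, $G_k$, or the blocks internal to that chain has a cut vertex available for free from the correct side (through an incident $K_2$-block, as the far endpoint of the chain, or as one of two cut vertices it carries when internal); a count along the block tree then drops the total to $\sum_i|S_i|\le 2c(G)-2$. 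Hence $\mathrm{dim}(G)\le 2c(G)-2<2c(G)-1$ in every case, and likewise for $\mathrm{edim}$.

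The delicate step, which I expect to be the main obstacle, is making the per-block charges rigorous and globally consistent: one must pin down, along the whole block tree simultaneously, which cut vertex of each block is designated ``free'' and on which side, whether a given cycle block needs $0$, $1$, or $2$ of its own landmarks, and then check that the unit saved under each hypothesis is genuinely additional rather than the single unit the plain reduction already consumes when every non-trivial block is a cycle, all while verifying that the resulting union $\bigcup_iS_i$ still resolves $G$ globally and not merely block by block. Cycle blocks carrying two or more incident cut vertices, and running the vertex-metric and edge-metric arguments side by side (both resting on $\mathrm{dim}(C_n)=\mathrm{edim}(C_n)=2$ and on the same construction), are the places that require genuine care; the remainder is routine propagation of $c(G)=\sum_i c(G_i)$ through the block tree of $G$.
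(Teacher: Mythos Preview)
The paper does not prove this lemma; it is imported from \cite{SedSkreLeaflessCacti} as an established result, so there is no in-paper proof to compare against. What you have written is, by your own closing paragraph, a strategy rather than a proof, and it has two genuine gaps.

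First, you begin by assuming $\mathrm{dim}(H)\le 2c(H)-1$ and $\mathrm{edim}(H)\le 2c(H)-1$ for every $2$-connected $H\ne C_n$. That is precisely the restriction of Conjectures~\ref{Con_dim_leaves} and~\ref{Con_edim_leaves} to $2$-connected graphs, which this paper treats as open; you cannot invoke it as a ``standing assumption'' without either proving it or making the lemma explicitly conditional on it. Second, the block-tree accounting is only heuristic. You assert that a cycle block ``charges at most $2$, and strictly less once one of its cut vertices is supplied for free from the appropriate side,'' and that in the second case ``a count along the block tree then drops the total to $\sum_i|S_i|\le 2c(G)-2$,'' but you never specify which cut vertex is free for which block, never verify that $\bigcup_i S_i$ resolves pairs of vertices (resp.\ edges) lying in \emph{different} blocks, and never disentangle the single unit already absorbed by the plain reduction from the extra unit you claim under each hypothesis. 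For a template of how the cross-block resolution step must actually be carried out, compare the proof of the Proposition in Section~4 of the present paper, which executes a very similar per-block construction in full and spends most of its effort precisely on pairs $x\in G_i$, $x'\in G_j$ that are not distinguished by the union $S$.
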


In this paper, we consider $2$-connected graphs that attain the bound of
Conjectures \ref{Con_dim_leaves} and \ref{Con_edim_leaves}. In particular, we
study $\Theta$-graphs, as they are the simplest $2$-connected graphs distinct
from cycles. We show that the upper bound $2c(G)-1$ holds for both metric
dimensions of $\Theta$-graphs. Since for all $\Theta$-graphs the value of
cyclomatic number equals $2,$ to prove the conjectures it is sufficient to
prove that for all such graphs metric dimensions are bounded above by $3$. We
also characterize all $\Theta$-graphs for which the bounds are attained. The
paper is concluded with the conjectures that the already known extremal
leafless cacti from \cite{SedSkreLeaflessCacti} and the extremal $\Theta
$-graphs established in this paper are the only leafless graphs for which the
bound $2c(G)-1$ is attained. For these conjectures we also established that
they reduce to the same problem on the class of $2$-connected graphs. Similar
results for yet another variant of metric dimension, so called mixed metric
dimension, were already reported in \cite{SedSkreTheta, SedSkrekMixed}.

\section{$\Theta$-graphs with metric dimensions equal to $3$}

So, let us first introduce a necessary notation for $\Theta$-graphs. Let $G$
be a $\Theta$-graph, by $u$ and $v$ we denote the two vertices of degree $3$
in $G.$ Notice that there are three distinct paths in $G$ connecting $u$ and
$v,$ we will denote them by $P_{1}=u_{0}u_{1}\cdots u_{p},$ $P_{2}=v_{0}%
v_{1}\cdots v_{q}$ and $P_{3}=w_{0}w_{1}\cdots w_{r},$ so that $u_{0}%
=v_{0}=w_{0}=u$, $u_{p}=v_{q}=w_{r}=v$ and $p\leq q\leq r$. The cycle in $G$
induced by paths $P_{i}$ and $P_{j}$ will be denoted by $C_{ij}$. A $\Theta
$-graph in which paths $P_{1},$ $P_{2}$ and $P_{3}$ are of lengths $p,$ $q,$
and $r$ respectively, is denoted by $\Theta_{p,q,r}$.

\begin{lemma}
\label{Lemma_vertexTheta} Let $G=\Theta_{p,p,p}$ or $\Theta_{p,p,p+2}$ with
$p\geq2$. Then $\mathrm{dim}(G)\geq3$.
\end{lemma}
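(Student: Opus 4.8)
The plan is to show that no two vertices can form a metric generator for $G=\Theta_{p,p,p}$ or $\Theta_{p,p,p+2}$, which forces $\mathrm{dim}(G)\geq 3$. Since $G$ has $c(G)=2$, a set of size $2$ is the only thing we need to rule out; size $1$ is trivial since no graph with a cycle has metric dimension $1$. So I would fix an arbitrary set $S=\{s_1,s_2\}\subseteq V(G)$ and exhibit two distinct vertices $x,x'$ with $d(s_1,x)=d(s_1,x')$ and $d(s_2,x)=d(s_2,x')$.

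The key structural feature to exploit is the abundant symmetry of these two $\Theta$-graphs. For $\Theta_{p,p,p}$ the automorphism group acts transitively on the three paths and also swaps $u$ and $v$; for $\Theta_{p,p,p+2}$ there is still the symmetry swapping the two short paths $P_1,P_2$, the symmetry reversing each path (swapping $u\leftrightarrow v$), and — crucially — a ``twin-like'' behaviour of the two middle vertices $u_{\lceil p/2\rceil},\dots$ For each possible placement of $s_1$ and $s_2$ among the three paths, I would use an appropriate automorphism (a reflection fixing the axis through the relevant vertices, or the reflection swapping two equal-length paths) to produce a pair of vertices swapped by that automorphism and fixed, distance-wise, by both $s_1$ and $s_2$: if $\sigma$ is an automorphism with $\sigma(s_i)=s_i$ for $i=1,2$ and $\sigma(x)=x'\neq x$, then $d(s_i,x)=d(s_i,\sigma^{-1}x')=d(s_i,x')$, so $S$ fails. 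The casework is organised by how many of the paths contain a point of $S$ in their interior, and by symmetry among those paths.

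The main obstacle will be the case where $s_1$ and $s_2$ together touch all the structurally distinct parts of the graph, so that no single nontrivial automorphism fixes both — for instance $s_1$ on $P_1$ and $s_2$ on $P_3$ in $\Theta_{p,p,p+2}$, or $s_1,s_2$ on two different short paths. In those cases a pure symmetry argument is not enough and I expect to argue directly: compute the distance vectors and find an unresolved pair ``on the far side''. Concretely, if neither $s_i$ lies on $P_3$ (the long path in the $\Theta_{p,p,p+2}$ case, or any fixed path in the regular case), then shortest paths from both $s_1$ and $s_2$ to vertices of $P_3$ enter $P_3$ only through its endpoints $u$ and $v$; since $P_3$ has length $p$ or $p+2\geq 4$, it contains two interior vertices equidistant from both $u$ and $v$ (when the length is even) or two interior vertices $w_j,w_{j+1}$ straddling the middle, and one checks these are not resolved by $\{d(s_i,u),d(s_i,v)\}$. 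A symmetric argument handles the subcase where, say, only $P_1$ meets $S$: then $P_2$ and $P_3$ are both ``untouched'' and we find an unresolved pair using vertices on $P_2\cup P_3$ that are interchanged by the reflection swapping $P_2$ with itself or by comparing equal-length segments. Assembling these few cases — symmetry-based where an automorphism survives, direct distance computation where it does not — completes the argument that every two-element set fails, hence $\mathrm{dim}(G)\geq 3$.
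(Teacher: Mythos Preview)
Your plan to exploit automorphisms is sound for the easy cases ($S\subseteq V(P_i)$ for a single $i$, or $S=\{u,v\}$), and the paper dispatches those the same way. The real content of the lemma is the case where $s_1$ and $s_2$ are interior vertices of two \emph{different} paths, and here your sketch has a genuine gap.

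Consider the sub-case you treat most concretely: $s_1\in V(P_1)$ and $s_2\in V(P_2)$, both interior, so neither lies on $P_3$. You propose to find an unresolved pair among the interior vertices of $P_3$. For a generic placement of $s_1,s_2$ no such pair exists. In $\Theta_{4,4,4}$ with $s_1=u_1$ and $s_2=v_2$, the distances from $u_1$ to $w_1,w_2,w_3$ are $2,3,4$, all distinct, so $u_1$ alone already resolves every pair inside $P_3$. The pair that $\{u_1,v_2\}$ actually fails to resolve is $\{u_3,w_1\}$: one vertex on $P_1$ and one on $P_3$. More generally, writing $s_1=u_i$ and $s_2=v_m$, two interior vertices $w_j,w_k$ of $P_3$ are equidistant from $s_1$ only if $j+k=2(p-i)$, and from $s_2$ only if $j+k=2(p-m)$; these coincide only when $i=m$. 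Your phrase ``two interior vertices equidistant from both $u$ and $v$'' is also not right (an even path has exactly one such vertex), and in any event equidistance from $u,v$ is not the relevant condition once $s_1,s_2\neq u,v$.

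The paper handles this case, and the companion case $s_1\in V(P_1)$, $s_2\in V(P_3)$ for $\Theta_{p,p,p+2}$ (which your sketch flags as the main obstacle but does not address with any concrete mechanism), by an explicit arithmetic construction: with $d_1=d(s_1,u)$, $d_2=d(s_2,u)$, $a=d_1+d_2$, $b=2p-a$, $c=(b-a)/2$ and $d=2d_1+c$, one verifies directly that $u_d\in V(P_1)$ and $w_c\in V(P_3)$ are not distinguished by $S$ (with an analogous choice on $C_{13}$ and $P_2$ in the second case). The unresolved pair necessarily straddles two paths, and locating it requires this kind of computation rather than a symmetry or ``untouched path'' argument. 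Your proposal needs this ingredient, or something equivalent to it, to close the gap.
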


\begin{proof}
Let $S\subseteq V(G)$ be a set of vertices in $G$ such that $\left\vert
S\right\vert =2$. It is sufficient to show that $S$ is not a vertex metric
generator. First, if $S=\{u,v\},$ then $u_{1}$ and $v_{1}$ are not
distinguished by $S,$ so we can assume $v\not \in S$. Now, let us consider the
case $S\subseteq V(P_{i})$ for some $i\in\{1,2,3\}.$ Assume first $S\subseteq
V(P_{3}).$ Since $P_{1}$ and $P_{2}$ are of equal length, the distance of
$u_{1}$ and $v_{1}$ to all vertices of $P_{3}$ is the same, hence $S$ does not
distingush $u_{1}$ and $v_{1}.$ Let us now assume $S\subseteq V(P_{1})$ and
let us consider vertices $v_{1}$ and $w_{1}.$ Notice that a shortest path from
both $v_{1}$ and $w_{1}$ to all vertices of $P_{1}$ leads through $u.$ This
implies that the distance from $v_{1}$ and $w_{1}$ to all the vertices of
$P_{1}$ is the same, so a set $S\subseteq V(P_{1})$ would not distinguish
$v_{1}$ and $w_{1}.$ The same reasoning goes for $S\subseteq V(P_{2}),$ so we
may assume that $S\not \subseteq V(P_{i})$ for every $i=1,2,3.$

Now, denote by $s_{1}$ and $s_{2}$ the two elements of $S$. Then $s_{1}$ and
$s_{2}$ are internal vertices of paths $P_{i}$ and $P_{j}$, respectively,
where $i\not =j$. We distinguish two cases.\begin{figure}[h]
\begin{center}
$%
\begin{array}
[c]{ll}%
\text{a) \raisebox{-1\height}{\includegraphics[scale=0.6]{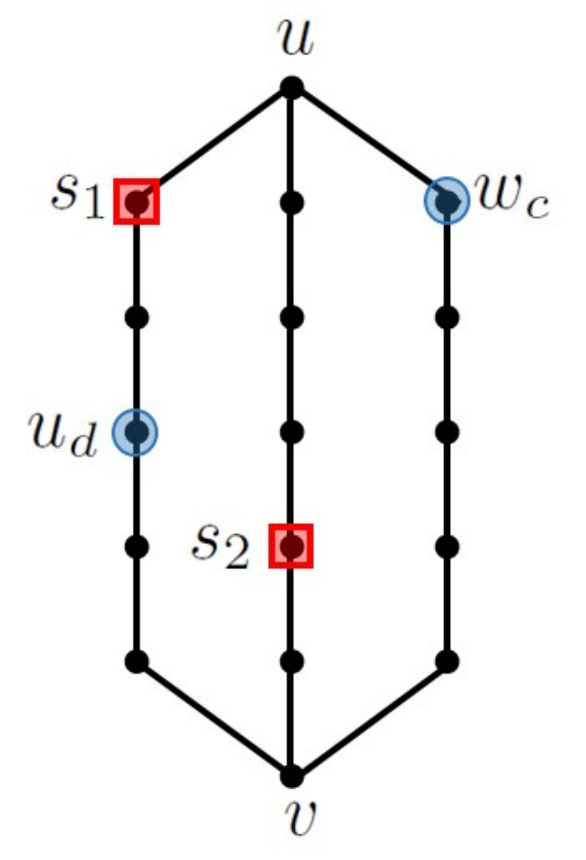}}} &
\text{b) \raisebox{-1\height}{\includegraphics[scale=0.6]{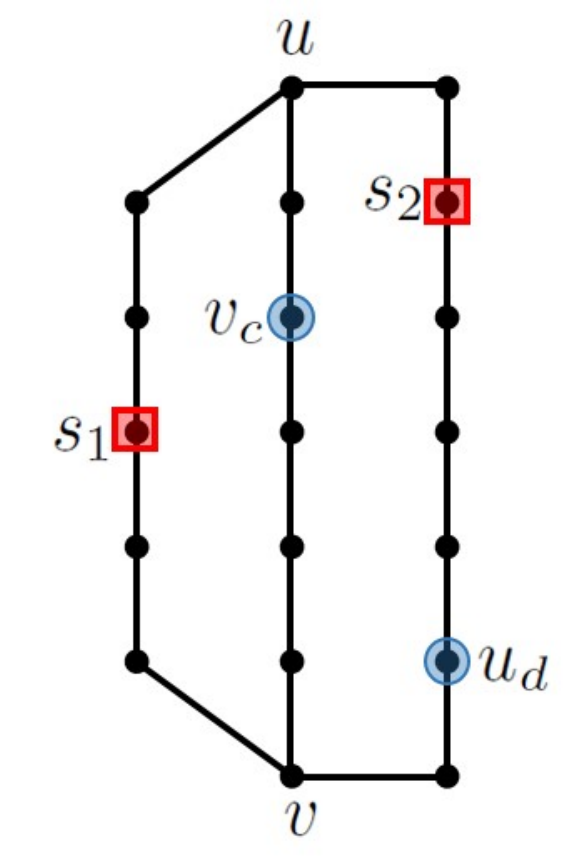}}}%
\end{array}
$
\end{center}
\caption{A set $S=\{s_{1},s_{2}\}$ in the proof of Lemma
\ref{Lemma_vertexTheta}: a) case when $s_{1}\in V(P_{1})$ and $s_{2}\in
V(P_{2})$ with $p=6,$ $d_{1}=1,$ $d_{2}=4,$ $a=5,$ $b=7,$ $c=1$ and $d=3$ in
which $u_{d}$ and $w_{c}$ are not distinguished by $S;$ b) case when $s_{1}\in
V(P_{1})$ and $s_{2}\in V(P_{3})$ with $p=6,$ $d_{1}=3,$ $d_{2}=2,$ $a=5,$
$b=9,$ $c=2$ and $d=8,$ where $u_{d}$ and $v_{c}$ are not distinguished by
$S.$}%
\label{Fig_cases}%
\end{figure}

\medskip\noindent\textbf{Case 1:} $s_{1}\in V(P_{1})$\emph{ and }$s_{2}\in
V(P_{2})$\emph{.} Let us denote $d_{1}=d(s_{1},u),$ $d_{2}=d(s_{2},u),$
$a=d_{1}+d_{2}$ and $b=2p-a$. If $a=b,$ then $s_{1}$ and $s_{2}$ form an
antipodal pair on $C_{12}$, which implies that two neighbours of $s_{1}$ are
not distinguished by $S$. So, without loss of generality we may assume $a<p$
and $d_{1}\leq d_{2}$. Since $a+b=2p,$ it follows that $a$ and $b$ are of the
same parity, hence $b-a$ is a positive even number. Therefore, we can define
$c=(b-a)/2$ and we know that $c$ is a positive integer. Let $d=2d_{1}+c$.
Notice that
\[
c<d=2d_{1}+c\leq a+c=\frac{a}{2}+\frac{b}{2}=p.
\]
So there exist interior vertices $u_{d}\in P_{1}$ and $w_{c}\in P_{3}$, see
Figure \ref{Fig_cases} a).

Now we prove that $u_{d}$ and $w_{c}$ are not distinguished by $S$. Notice
that $d(u_{d},s_{1})=d-d_{1}=d_{1}+c.$ Since
\[
c+d_{1}=\frac{b}{2}-\frac{a}{2}+d_{1}\leq\frac{b}{2}=p-\frac{a}{2}<p,
\]
we have $d(w_{c},s_{1})=c+d_{1}$, and so $u_{d}$ and $w_{c}$ are not
distinguished by $s_{1}$. As for $s_{2}$, notice that
\[
c+d_{2}<\frac{b-a}{2}+a=p,
\]
so we have $d(w_{c},s_{2})=c+d_{2}.$ Also, we have
\[
d_{2}+d=d_{2}+2d_{1}+c=a+d_{1}+\frac{b-a}{2}=p+d_{1}>p,
\]
which implies
\[
d(u_{d},s_{2})=2p-d-d_{2}=2p-p-d_{1}=p-d_{1}=p-a+d_{2}=c+d_{2}.
\]
We conclude that $u_{d}$ and $w_{c}$ are not distinguished by $s_{2}$ either,
so $S$ is not a vertex metric generator.

\medskip\noindent\textbf{Case 2:} $s_{1}\in V(P_{1})$\emph{ and }$s_{2}\in
V(P_{3})$\emph{.} For $G=\Theta_{p,p,p}$ this case is analogous to the
previous one, so let us assume $G=\Theta_{p,p,p+2}$. Again, denote
$d_{1}=d(u,s_{1})$, $d_{2}=d(u,s_{2}),$ $a=d_{1}+d_{2}$ and $b=2p+2-a$. If
$a=b,$ then $s_{1}$ and $s_{2}$ are antipodal on $C_{13}$, so the two
neighbors of $s_{1}$ are not distinguished by $S$. Hence, without loss of
generality we may assume $a<b.$ Let us denote $c=(b-a)/2.$ Since $a+b=2p+2$ we
know that $a$ and $b$ are of the same parity, so $b-a$ is a positive integer.
Consequently, also $c$ is a positive integer.

First, since $s_{1}$ and $s_{2}$ are internal vertices of paths $P_{1}$ and
$P_{3}$ respectively, we have $a=d_{1}+d_{2}\geq2$. This yields%
\[
c=\frac{b-a}{2}=\frac{a+b}{2}-a=p+1-a\leq p-1.
\]
Hence, there exists an interior vertex $v_{c}\in V(P_{2}),$ as it is shown in
Figure \ref{Fig_cases} b). Also, notice that%
\[
d_{1}+c<a+\frac{b-a}{2}=\frac{a+b}{2}=p+1,
\]
which implies $d(v_{c},s_{1})=d_{1}+c.$

Now, let $d=2d_{1}+c.$ If $d\leq p$ we consider the vertex $u_{d}\in
V(P_{1}),$ otherwise for the sake of simplicity we denote $u_{d}=w_{2p+2-d}$,
see Figure \ref{Fig_cases} b). We have already shown $d_{1}+c<p+1$, which
yields
\[
d-d_{1}=d_{1}+c<p+1,
\]
and so $d(u_{d},s_{1})=d-d_{1}=d_{1}+c=d(v_{c},s_{1})$. Hence, $u_{d}$ and
$v_{c}$ are not distinguished by $s_{1}.$ It remains to prove that $u_{d}$ and
$v_{c}$ are not distinguished by $s_{2}$ either. For that purpose, notice that%
\[
c+d_{2}<c+a=\frac{b-a}{2}+a=\frac{a+b}{2}=p+1,
\]
which implies $d(v_{c},s_{2})=c+d_{2}.$ Also, notice that
\begin{align*}
2p+2-d-d_{2}  &  =a+b-2d_{1}-c-d_{2}=a+b-d_{1}-\frac{b-a}{2}-(d_{1}+d_{2})\\
&  =\frac{a+b}{2}-d_{1}=p+1-d_{1}<p+1,
\end{align*}
which implies
\begin{align*}
d(s_{2},u_{d})  &  =2p+2-d-d_{2}=\frac{a+b}{2}-d_{1}=\frac{a+b}{2}%
-a+a-d_{1}=\\
&  =\frac{b-a}{2}+d_{2}=c+d_{2}=d(v_{c},s_{2}).
\end{align*}
Therefore, vertices $v_{c}$ and $u_{d}$ are not distinguished by $s_{2}$
either, hence we conclude that $S$ is not a vertex metric generator.
\end{proof}

Now, a subgraph $H$ of a graph $G$ is an \emph{isometric} subgraph if
$d_{H}(u,v)=d_{G}(u,v)$ for every pair of vertices $u,v\in V(H).$
Consequently, if a pair of vertices is distinguished by $S\cap V(H)$ in $H,$
then it is distinguished by $S$ in $G$ too.

\begin{lemma}
\label{Lemma_vertexGenerators} Let $G=\Theta_{p,p,p}$ or $\Theta_{p,p,p+2}$
with $p\geq\not 2  $. Then for any $a\in V(G),$ there are $b,c\in V(G)$ such
that $S=\{a,b,c\}$ is a vertex metric generator in $G$.
\end{lemma}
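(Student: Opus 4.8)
The plan is to show that for every choice of $a \in V(G)$ one can complete $\{a\}$ to a vertex metric generator of size $3$ by adding two more vertices $b,c$, chosen so that the three vertices are spread across the three $u$--$v$ paths in a balanced way. The natural candidates for a generator are the two branch vertices $u,v$ together with one interior vertex of a path, or one interior vertex in each of two distinct paths; so the first step is to split the argument according to where $a$ lies. If $a$ is an interior vertex of $P_i$, I would try $b$ an interior vertex of $P_j$ and $c$ an interior vertex of $P_k$ with $\{i,j,k\}=\{1,2,3\}$; if $a\in\{u,v\}$ a symmetric choice works. Because $G=\Theta_{p,p,p}$ or $\Theta_{p,p,p+2}$ is highly symmetric, one can normalise $a$ to a convenient position on its path and exploit the automorphisms swapping the two equal-length paths and swapping $u \leftrightarrow v$ to reduce the number of essentially different configurations.

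The key step is the verification that a well-chosen triple $S=\{a,b,c\}$, with one vertex on each of $P_1,P_2,P_3$, distinguishes all pairs of vertices. Here I would use the isometric-subgraph remark preceding the lemma: each cycle $C_{ij}$ is an isometric subgraph of $G$ (this needs the hypothesis $p\le q\le r$ with $r\le p+2$, which guarantees no ``shortcut'' through the third path), so it suffices to check that the two members of $S$ lying on $C_{ij}$ distinguish every pair of vertices of $C_{ij}$. Two vertices on a cycle $C_{ij}$ of length $\ell$ distinguish all pairs of $C_{ij}$ precisely when they are not antipodal and not too close together in a way that creates a symmetry; concretely, two vertices at cycle-distance $k$ apart resolve $C_\ell$ iff $2k \ne \ell$ and the two arcs are handled — this is the classical fact that $\dim(C_\ell)=2$ with the resolving pairs being exactly the non-antipodal ones. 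Thus, if I can place $a,b,c$ so that on each of the three cycles $C_{12},C_{13},C_{23}$ the corresponding two vertices form a resolving pair (i.e.\ are non-antipodal on that cycle), then every pair of vertices of $G$ lies on a common $C_{ij}$ and is resolved. The counting of arc-lengths on $C_{12}$ (length $2p$), $C_{13}$ and $C_{23}$ (length $2p$ or $2p+2$) is a short parity/inequality check once $a$ is fixed.

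The main obstacle is the boundary cases: when $a$ is forced to be close to $u$ or $v$ (for instance $a=u_1$), or when $p$ is small (say $p=3$, since $p\ge 3$ here after excluding $p=2$), the freedom in choosing $b,c$ shrinks and one must be careful that the three simultaneous non-antipodality conditions can be met together; there may be a handful of small $\Theta$-graphs needing a direct ad hoc triple. I would also need to double-check that $C_{ij}$ really is isometric in all the relevant subcases — for $\Theta_{p,p,p+2}$ the cycle $C_{12}$ (two short paths) is clearly isometric, and $C_{13},C_{23}$ are isometric as well because the detour through $P_1$ or $P_2$ has length $2p$ which is $\ge$ the length of the longer path $r=p+2$ only when $p\ge 2$; this is exactly where $p\ge 2$ enters, and it is worth stating explicitly. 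Once isometry of all three cycles and the resolving-pair placement are in hand, the lemma follows; so I would organise the write-up as: (1) reduce to placing one vertex per path; (2) record that all three $C_{ij}$ are isometric; (3) for each position of $a$, exhibit $b,c$ and check the three non-antipodality inequalities; (4) dispose of the few small exceptional graphs by hand.
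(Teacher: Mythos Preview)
Your approach is essentially the same as the paper's: the authors also observe that every cycle $C_{ij}$ is an isometric subgraph, define a set $S$ to be ``nice'' when for each $C_{ij}$ the two vertices of $S\cap V(C_{ij})$ are non-antipodal on that cycle, and then for each position of $a$ (normalised by symmetry) exhibit an explicit nice triple such as $\{u_i,v_1,w_1\}$ or $\{u_1,v_1,w_j\}$, treating the smallest values of $p$ by hand. One small correction: the hypothesis is $p\geq 2$ (the source has a typographical glitch), not $p\neq 2$, so $p=2$ must be included rather than excluded; your plan to handle small cases ad hoc already covers this.
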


\begin{proof}
First, notice that every cycle $C_{ij}$ of $G$ is an isometric subgraph in
$G.$ We say that a set $S\subseteq V(G)$ is \emph{nice}, if for every cycle
$C_{ij}$ of $G$ it holds that $S\cap V(C_{ij})$ contains two vertices which do
not form an antipodal pair in $C_{ij}$. We first show that any nice set $S$ is
a vertex metric generator in $G.$ In order to see this, let $x$ and
$x^{\prime}$ be a pair of vertices from $G.$ Notice that $x$ and $x^{\prime}$
belong to at least one cycle $C_{ij}$ in $G$. Since $S$ is nice, $S\cap
V(C_{ij})$ contains two vertices which are not antipodal in $C_{ij}$, which
implies that $S\cap V(C_{ij})$ is a vertex metric generator in $C_{ij}.$
Therefore, $x$ and $x^{\prime}$ are distinguished by $S\cap V(C_{ij})$ in
$C_{ij}.$ Since $C_{ij}$ is an isometric subgraph of $G,$ this further implies
that $x$ and $x^{\prime}$ are distinguished by $S$ in $G,$ so $S$ is a vertex
metric generator of $G$. To complete the proof, for every $a\in V(G)$ we
extend $a$ to a nice set.

Let us assume $G=\Theta_{p,p,p}.$ If $p\leq3,$ the set $S=\{u,v_{1},w_{1}\}$
is a nice set in $G$. Therefore, $S$ is a vertex metric generator, which due
to symmetry of $G$ proves the claim. So, let us assume that $p\geq4$. By
symmetry, we may assume that $a=u_{i}$, where $0\leq i\leq\left\lfloor
p/2\right\rfloor $. But then $S_{i}=\{u_{i},v_{1},w_{1}\}$ is a nice set in
$G$.

Assume now that $G=\Theta_{p,p,p+2}.$ If $p=2,$ it is easy to see that sets
$S=\{u,v_{1},w_{1}\}$ and $S=\{u,v_{1},w_{2}\}$ are nice in $G,$ which due to
symmetry of $G$ proves the claim. If $p>2,$ then due to symmetry of $G$ it is
sufficient to prove the claim for $a=u_{i}$ where $0\leq i\leq\left\lfloor
p/2\right\rfloor $ and for $a=w_{j}$ where $1\leq j\leq\left\lfloor
p/2\right\rfloor +1.$ If $a=u_{i}$ for $i\leq\left\lfloor p/2\right\rfloor ,$
then $S=\{u_{i},v_{1},w_{1}\}$ is nice in $G$. On the other hand, if $a=w_{j}$
for $j\leq\left\lfloor p/2\right\rfloor +1$, then $S=\{u_{1},v_{1},w_{j}\}$ is
nice in $G$.
\end{proof}

By Lemmas~{\ref{Lemma_vertexTheta}} and~{\ref{Lemma_vertexGenerators}} the
following statement holds.

\begin{theorem}
For $p\geq2,$ it holds that $\mathrm{dim}(\Theta_{p,p,p})=\mathrm{dim}%
(\Theta_{p,p,p+2})=3$.
\end{theorem}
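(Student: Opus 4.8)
The plan is to combine the two preceding lemmas directly, since together they pin the value of $\mathrm{dim}(G)$ from both sides for $G = \Theta_{p,p,p}$ and $G = \Theta_{p,p,p+2}$. By Lemma~\ref{Lemma_vertexTheta}, any vertex metric generator of such a $G$ must have at least $3$ vertices, which gives the lower bound $\mathrm{dim}(G) \geq 3$. For the upper bound, I would invoke Lemma~\ref{Lemma_vertexGenerators}: picking any single vertex $a \in V(G)$ (for instance $a = u$), the lemma produces $b, c \in V(G)$ so that $\{a,b,c\}$ is a vertex metric generator; hence $\mathrm{dim}(G) \leq 3$. Putting the two inequalities together yields $\mathrm{dim}(\Theta_{p,p,p}) = \mathrm{dim}(\Theta_{p,p,p+2}) = 3$ for all $p \geq 2$.

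The only subtlety worth flagging is the hypothesis mismatch in Lemma~\ref{Lemma_vertexGenerators}, whose statement reads ``$p \geq \not 2$'' — almost certainly a typo for $p \geq 2$, since its proof explicitly treats the cases $p \leq 3$, $p = 2$, and $p > 2$, covering $p = 2$. So I would read that lemma as applying for all $p \geq 2$, matching the range in Lemma~\ref{Lemma_vertexTheta}, and then the theorem follows on exactly that range. No genuine obstacle arises here; the work has all been done in the two lemmas, and the theorem is just the bookkeeping step that records $\mathrm{dim} = 3$ rather than merely $\mathrm{dim} \geq 3$ or $\mathrm{dim} \leq 3$ in isolation. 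If one wanted to be fully explicit, one could note that Lemma~\ref{Lemma_vertexGenerators} in fact shows every vertex lies in some minimum generator, which is more than needed for the bound $\mathrm{dim}(G) \leq 3$ but is exactly the kind of fact used later when analysing how these $\Theta$-graphs sit inside larger $2$-connected graphs.
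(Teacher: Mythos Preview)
Your proposal is correct and matches the paper's approach exactly: the paper simply records that the theorem follows from Lemmas~\ref{Lemma_vertexTheta} and~\ref{Lemma_vertexGenerators}, which is precisely the two-sided bound you describe. Your reading of the ``$p\geq\not 2$'' typo as $p\geq 2$ is also correct.
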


Since in any $\Theta$-graph $G$ it holds that $L(G)=0$ and $c(G)=2,$ the above
theorem gives the following corollary.

\begin{corollary}
We have $\mathrm{dim}(\Theta_{p,p,p})=\mathrm{dim}(\Theta_{p,p,p+2})=2c(G)-1$.
\end{corollary}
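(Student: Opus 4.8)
The corollary follows immediately: I would simply observe that for any $\Theta$-graph $G$ the cyclomatic number is $c(G)=|E(G)|-|V(G)|+1=2$ (each of the three paths $P_1,P_2,P_3$ contributes its edges, and a short count of vertices versus edges gives $2$), and since $\Theta$-graphs have minimum degree $2$ there are no leaves, so $L(G)=0$. Hence $2c(G)-1=3$, and the preceding theorem gives $\mathrm{dim}(\Theta_{p,p,p})=\mathrm{dim}(\Theta_{p,p,p+2})=3=2c(G)-1$.

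There is essentially no obstacle here; the only thing worth being slightly careful about is the value of $L(G)$, which is defined in the cited papers rather than in this excerpt — but the text explicitly asserts $L(G)=0$ for every $\Theta$-graph (as it must, since $L$ records the contribution of leaves and a $\Theta$-graph is leafless), so one may invoke that directly. The plan is therefore just to substitute $c(G)=2$ and $L(G)=0$ into the bound and quote the theorem.

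\begin{proof}
Every $\Theta$-graph $G$ has exactly two vertices of degree $3$ and all remaining vertices of degree $2$, so $\delta(G)=2$ and in particular $G$ has no leaves; consequently $L(G)=0$. Writing $G=\Theta_{p,q,r}$, the three internally disjoint $u$--$v$ paths of lengths $p$, $q$, $r$ give $|E(G)|=p+q+r$ and $|V(G)|=p+q+r-1$, whence $c(G)=|E(G)|-|V(G)|+1=2$. Therefore $2c(G)-1=3$, and by the preceding theorem $\mathrm{dim}(\Theta_{p,p,p})=\mathrm{dim}(\Theta_{p,p,p+2})=3=2c(G)-1$.
\end{proof}
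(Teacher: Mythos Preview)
Your proof is correct and matches the paper's own justification, which is simply the sentence preceding the corollary: since any $\Theta$-graph has $L(G)=0$ and $c(G)=2$, the theorem $\mathrm{dim}(\Theta_{p,p,p})=\mathrm{dim}(\Theta_{p,p,p+2})=3$ immediately yields $2c(G)-1$. The explicit vertex/edge count you give is a harmless elaboration of the claim $c(G)=2$, and the mention of $L(G)$ is not strictly needed for the corollary as stated but does no harm.
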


Hence, for $\Theta_{p,p,p}$ and $\Theta_{p,p,p+2}$ the bound from
Conjecture~{\ref{Con_dim_leaves}} holds with equality. Similarly, when
considering the edge metric dimension of $\Theta$-graphs, we have the following.

\begin{lemma}
\label{Lemma_edgeGenerators}Let $G=\Theta_{1,2,2}$ or $\Theta_{p,p,q}$ with
$2\leq p\leq3$ and $p\leq q\leq p+2.$ Then for any $a\in V(G),$ there are
$b,c\in V(G)$ such that $S=\{a,b,c\}$ is an edge metric generator in $G$.
\end{lemma}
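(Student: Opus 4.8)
The plan is to mimic the structure of the proof of Lemma~\ref{Lemma_vertexGenerators}, but adapted to the edge metric setting. First I would introduce an analogue of a ``nice'' set: call $S\subseteq V(G)$ \emph{edge-nice} if for every cycle $C_{ij}$ of $G$ the intersection $S\cap V(C_{ij})$ is an edge metric generator of $C_{ij}$; recall that in a cycle $C_n$ a two-element set is an edge metric generator exactly when the two chosen vertices do not split the cycle into two paths of equal length (equivalently, for even cycles they must not be antipodal, and for odd cycles any two suffice). Since each $C_{ij}$ is an isometric subgraph of $G$, any pair of edges lying together in some cycle $C_{ij}$ is distinguished by an edge-nice $S$. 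The one genuine gap compared with the vertex case is that not every pair of edges of $G$ necessarily lies in a common cycle $C_{ij}$ — but in a $\Theta$-graph every edge belongs to exactly two of the three cycles, and any two edges share at least one common cycle unless they lie on the two ``private'' paths of the two different cycles; I would check this small case-analysis explicitly, or simply verify directly that the handful of cross-cycle edge pairs are still distinguished by the concrete sets exhibited below.

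Next I would handle the finitely many graphs one at a time, since the hypothesis only allows $\Theta_{1,2,2}$ and $\Theta_{p,p,q}$ with $p\in\{2,3\}$ and $q\in\{p,p+1,p+2\}$ — that is, $\Theta_{1,2,2}$, $\Theta_{2,2,2}$, $\Theta_{2,2,3}$, $\Theta_{2,2,4}$, $\Theta_{3,3,3}$, $\Theta_{3,3,4}$, $\Theta_{3,3,5}$, a short finite list. For each such $G$ and each starting vertex $a$, exploiting the symmetry of $G$ to reduce $a$ to a small set of representatives (an interior vertex of a shortest path, or $u$, or an interior vertex of a longest path up to its midpoint), I would exhibit an explicit completion $\{a,b,c\}$ — typically something like $\{a,v_1,w_1\}$, $\{u_1,v_1,w_j\}$, or a minor variant — and check that it is edge-nice, i.e.\ that on each of the three cycles the two vertices of $S$ lying on it are not an equal-splitting pair, and additionally that the few edge pairs not covered by a common cycle are distinguished. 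Because cyclomatic number is $2$ and the cycles are short, each verification is a direct distance computation.

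The main obstacle I anticipate is bookkeeping rather than conceptual: ensuring that the chosen triple is simultaneously non-equal-splitting on \emph{all three} cycles $C_{12}$, $C_{13}$, $C_{23}$, which is more constraining than in the vertex case because edge metric generators on even cycles are slightly more restrictive, and because when $q=p+2$ the cycle $C_{12}$ has length $2p$ (even) while $C_{13}$ and $C_{23}$ have length $p+q=2p+2$ (also even), so antipodality must be dodged in two even cycles at once. I would also need to be a little careful with the smallest cases ($\Theta_{1,2,2}$, where $P_1$ is a single edge, and $\Theta_{2,2,2}=K_4$ minus an edge), treating them by hand. I expect the final write-up to be a compact lemma-with-nice-set preamble followed by a per-graph table of witness sets, paralleling the proof of Lemma~\ref{Lemma_vertexGenerators} almost verbatim in its logical skeleton.
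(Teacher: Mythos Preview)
Your route differs markedly from the paper's: the paper simply observes that only seven small graphs are involved and verifies the claim by computer, whereas you propose an explicit by-hand argument modelled on Lemma~\ref{Lemma_vertexGenerators} via an ``edge-nice'' notion. Your plan is viable, but two points deserve correction. First, the worry about pairs of edges not lying in a common cycle is unfounded: every edge of a $\Theta$-graph lies on exactly one $P_i$ and hence in the two cycles $C_{ij},C_{ik}$, so edges on $P_i$ and $P_j$ (even for $i\neq j$) both lie in $C_{ij}$; thus any two edges share a common $C_{ij}$ and the ``cross-cycle'' case never arises. Second, the assertion that every $C_{ij}$ is isometric in $G$ fails for $C_{23}$ in $\Theta_{1,2,2}$, since $d_G(u,v)=1<2=d_{C_{23}}(u,v)$; the isometry does hold for all three cycles in each $\Theta_{p,p,q}$ with $p\le q$, so your edge-nice argument goes through there, and you already earmark $\Theta_{1,2,2}$ for separate by-hand treatment. (Incidentally, $\Theta_{2,2,2}=K_{2,3}$, not $K_4$ minus an edge.) The paper's computer check is shorter to write; your approach trades brevity for a human-checkable argument that makes the parallel with Lemma~\ref{Lemma_vertexGenerators} explicit.
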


\begin{proof}
As $p=2$ or $3$ and $q\in\{p,p+1,p+2\},$ the problem is finite. To avoid a
tedious proof, the statement was easily verified by a computer by checking all
sets $S\subseteq V(G)$ of cardinality $3$.
\end{proof}

\begin{proposition}
Let $G=\Theta_{1,2,2}$ or $\Theta_{p,p,q}$ with $2\leq p\leq3$ and $p\leq
q\leq p+2$. Then $\mathrm{edim}(G)=3$.
\end{proposition}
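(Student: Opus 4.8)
The plan is to establish the two inequalities $\mathrm{edim}(G)\geq 3$ and $\mathrm{edim}(G)\leq 3$ separately. For the upper bound, I would simply invoke Lemma~\ref{Lemma_edgeGenerators}: since the graphs in question are exactly those covered by that lemma, for any vertex $a$ there is a three-element edge metric generator containing it, and in particular $\mathrm{edim}(G)\leq 3$. (Indeed, the only purpose of phrasing Lemma~\ref{Lemma_edgeGenerators} with the ``for any $a$'' clause is to make the present proposition and the analogous characterization statement follow immediately.)

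For the lower bound $\mathrm{edim}(G)\geq 3$, the cleanest route is again to defer to computation, since the family is finite: $p\in\{2,3\}$ and $q\in\{p,p+1,p+2\}$, together with the single sporadic graph $\Theta_{1,2,2}$, so there are only finitely many graphs and in each the number of two-element vertex subsets is small. Thus I would state that one checks by computer (or by hand, in the style of the proof of Lemma~\ref{Lemma_vertexTheta}) that no set $S\subseteq V(G)$ with $|S|=2$ distinguishes all pairs of edges. Alternatively, if a human-readable argument is preferred, one can adapt the case analysis of Lemma~\ref{Lemma_vertexTheta}: a two-element set $S=\{s_1,s_2\}$ either lies entirely on one path $P_i$ (or is $\{u,v\}$), in which case two edges incident to $u$ but on the other two paths are equidistant from all of $S$ by the same ``shortest path leads through $u$'' reasoning, or $s_1,s_2$ lie on two distinct paths, in which case one exhibits an antipodal-type pair of edges on the cycle $C_{ij}$ through $s_1$ and $s_2$, respectively on the third path, that $S$ fails to separate — the arithmetic being a mild variant of the $u_d$, $w_c$ construction already carried out.

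Combining the two bounds gives $\mathrm{edim}(G)=3$, completing the proof.

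The main obstacle, should one want a fully self-contained argument rather than a computer check, is the lower bound: the edge version of the distinguishing argument has more subcases than the vertex version (edges incident to $u$ or to $v$, edges straddling the ``midpoint'' of a path, parity issues depending on whether $q=p$, $q=p+1$, or $q=p+2$), and the sporadic graph $\Theta_{1,2,2}$ has to be handled on its own because $P_1$ is a single edge. Given that the paper already relegates Lemma~\ref{Lemma_edgeGenerators} to a computer verification, the honest and economical choice here is to do the same for the lower bound, so the proof reduces to the two sentences: ``The inequality $\mathrm{edim}(G)\leq 3$ follows from Lemma~\ref{Lemma_edgeGenerators}. For $\mathrm{edim}(G)\geq 3$, as the family of such graphs $G$ is finite, it was checked by computer that no two-element subset of $V(G)$ is an edge metric generator.''
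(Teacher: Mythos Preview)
Your proposal is correct and matches the paper's own proof essentially line for line: the paper also derives $\mathrm{edim}(G)\leq 3$ from Lemma~\ref{Lemma_edgeGenerators} and handles $\mathrm{edim}(G)\geq 3$ by a computer check over the finite family. Your discussion of a possible hand-proof alternative is extra commentary, but the approach you actually commit to is the same as the paper's.
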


\begin{proof}
Similarly as before, by a computer we checked easily that there is no edge
metric generator of size two. Then the claim follows from
Lemma~{\ref{Lemma_edgeGenerators}}.
\end{proof}

\begin{corollary}
Let $G=\Theta_{1,2,2}$ or $G=\Theta_{p,p,q}$ for $2\leq p\leq3$ and $p\leq
q\leq p+2$. Then $\mathrm{edim}(G)=2c(G)-1.$
\end{corollary}

\section{$\Theta$-graphs with metric dimensions equal to $2$}

In this section we show that all remaining $\Theta$-graphs, i.e., all $\Theta
$-graphs not mentioned in the previous section, have the vertex (resp. the
edge) metric dimension equal to $2$.
We first consider the vertex metric dimension. For all remaining $\Theta
$-graphs we show that there is a set $S$ of cardinality two which is a vertex
metric generator, see Figure \ref{Fig_VertexGenerators2}.

\begin{lemma}
\label{Lemma_dim2} Let $G=\Theta_{p,q,r}$, where $p\leq q\leq r,$ and let $S$
be a set of vertices in $G,$ defined in the following way:

\begin{enumerate}
\item[i)] if one of $p$, $q$, $r$ is odd and at least $3$ and one of $p$, $q$,
$r$ is even, say $q\geq3$ is odd and $r$ is even, then $S=\{v_{(q-1)/2}%
,w_{r/2}\}$;

\item[ii)] if $p=1$ and both $q$ and $r$ are even, then $S=\{u,w_{r/2}\}$;

\item[iii)] if all $p$, $q$, $r$ are even and $q\notin\{p,p+2\},$ then
$S=\{v_{1},w_{r/2}\}$;

\item[iv)] if all of $p$, $q$, $r$ are even, $q\in\{p,p+2\}$ and $r\geq p+4,$
then $S=\{v_{q/2},w_{1}\}$;

\item[v)] if all $p$, $q$, $r$ are even and $q=r=p+2,$ then $S=\{v_{1}%
,w_{1}\}$;

\item[vi)] if all $p$, $q$, $r$ are odd and $q\notin\{p,p+2\}$, then
$S=\{v_{1},w_{(r-1)/2}\}$;

\item[vii)] if all $p$, $q$, $r$ are odd, $q\in\{p,p+2\}$ and $r\geq p+4$,
then $S=\{v_{(q-1)/2},w_{1}\}$;

\item[viii)] if all $p$, $q$, $r$ are odd and $q=r=p+2,$ then $S=\{v_{1}%
,w_{1}\}$.
\end{enumerate}

\noindent Then $S$ is a vertex metric generator in $G.$
\end{lemma}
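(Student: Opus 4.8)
The strategy is to verify, in each of the eight cases, that the designated two-element set $S=\{x,y\}$ distinguishes every pair of vertices of $G=\Theta_{p,q,r}$. Rather than attacking all pairs at once, I would exploit the fact (used already in Lemma~\ref{Lemma_vertexGenerators}) that each cycle $C_{ij}$ is an isometric subgraph of $G$, so it suffices to worry about pairs $x',x''$ that do \emph{not} lie together on a single cycle $C_{ij}$ containing two non-antipodal vertices of $S$. Concretely, for each case I would first record the two cycles on which the two chosen vertices of $S$ lie, check that on each such cycle the pair is non-antipodal (this is where the parity hypotheses and the length constraints like $q\notin\{p,p+2\}$, $r\geq p+4$ enter), and conclude that all pairs of vertices lying on those cycles are already distinguished. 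The remaining pairs are those separated ``across'' the third path, and these must be handled by an explicit distance computation.

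\medskip The bookkeeping device I would set up is: fix the two vertices $s_1,s_2$ of $S$, let $d_i(z)=d(z,s_i)$, and for a vertex $z$ on path $P_k$ at distance $\ell$ from $u$ write its two candidate distances to $s_i$ as $\ell + d(u,s_i)$ (going around through $u$) versus $(\text{length of the relevant cycle}) - \ell - d(u,s_i)$ (going around the other way), taking the minimum — exactly the computation done in the proof of Lemma~\ref{Lemma_vertexTheta}. With this in hand, for a pair of vertices $z',z''$ not co-cyclic with a good pair of $S$, I would show that at least one of $s_1,s_2$ sees them at different distances by comparing these explicit formulas; the parity of $p,q,r$ and the chosen positions (midpoints like $v_{(q-1)/2}$, $w_{r/2}$, or the near-endpoint vertices $v_1,w_1$) are arranged precisely so that the relevant sums land on opposite sides of the ``halfway'' threshold of the cycle, forcing the two geodesics to have different lengths.

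\medskip I would organize the write-up by grouping the cases: (i)--(ii) are the ``mixed parity'' cases, where one vertex of $S$ sits at the midpoint of an odd path (so it is equidistant-free, i.e.\ antipodal to nothing on its two cycles) and the other at the midpoint of an even path; here the midpoint of the odd path already resolves every pair on the two cycles through it, and one checks the even-path midpoint handles the rest. Cases (iii)--(v) (all even) and (vi)--(viii) (all odd) are structurally parallel: when $q\notin\{p,p+2\}$ one puts $S$ near one branch point and at a far midpoint; when $q\in\{p,p+2\}$ but $r$ is long one shifts the midpoint to the short path; and the tiny symmetric case $q=r=p+2$ needs its own short argument with $S=\{v_1,w_1\}$. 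For each group I would prove one representative case in full and indicate that the others follow by the identical computation with indices permuted and parities adjusted.

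\medskip The main obstacle, and the only genuinely delicate point, is verifying non-antipodality and the cross-path distinguishing simultaneously in the boundary subcases — in particular confirming that the side conditions ($q\neq p,p+2$ in (iii) and (vi); $r\geq p+4$ in (iv) and (vii)) are exactly what is needed, and checking the small graphs where $p$ is as small as allowed (so that vertices like $v_1$ and $v_{q/2}$ may coincide or be adjacent to a branch vertex). I expect no conceptual difficulty beyond careful case analysis, but the casework must be exhaustive: one has to be sure that the pairs ``missed'' by the isometric-cycle argument really are only the cross-path pairs, and that no degenerate coincidence of vertices in a short $\Theta$-graph invalidates a formula. I would therefore state at the outset the convention $p\le q\le r$ together with the explicit lower bounds on $p,q,r$ implied by ``$G$ is not one of the exceptional graphs of Section~2,'' and refer back to it in each case.
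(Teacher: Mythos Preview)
Your plan has a genuine gap in how you deploy the isometric-cycle shortcut. You assert that in cases (i)--(ii) ``the midpoint of the odd path already resolves every pair on the two cycles through it,'' but a single vertex is never a metric generator of a cycle of length $\ge 3$: for instance $v_{(q-1)/2}$ fails to distinguish $v_{(q-1)/2-k}$ from $v_{(q-1)/2+k}$, and on $C_{12}$ it fails to distinguish $u_i$ from $u_{p+1-i}$. The argument borrowed from Lemma~\ref{Lemma_vertexGenerators} needs \emph{two} non-antipodal vertices of $S$ on the \emph{same} cycle $C_{ij}$. In every case here except (ii), both $S$-vertices are interior to distinct paths $P_2$ and $P_3$, so they lie together on exactly one cycle, namely $C_{23}$; only pairs with both endpoints in $V(C_{23})=V(P_2)\cup V(P_3)$ are disposed of for free. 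Every pair involving an interior vertex of $P_1$ --- pairs inside $P_1$ as well as cross-pairs $P_1$ versus $P_2$ or $P_3$ --- must still be handled by explicit distance computation, and your sketch materially underestimates that residual work (it is not merely ``across the third path'').

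The paper does not use isometric cycles at all. In each case it fixes one vertex $a\in S$, writes out explicitly the partition $\mathcal P_a$ of $V(G)$ into distance classes from $a$ (each class has at most four vertices), and then verifies arithmetically that the second vertex $b\in S$ separates every pair within each class; the side conditions $q\notin\{p,p+2\}$ and $r\ge p+4$ surface precisely as the inequalities making those comparisons strict. This is tedious but uniform and leaves nothing over. You can repair your approach by keeping the one-line isometric argument for $C_{23}$ and then running essentially the paper's class-by-class verification for all pairs meeting $P_1$, but you should expect little net savings over the paper's direct method.
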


\begin{proof}
First we introduce some notation. For a vertex $a\in V(G)$ we denote by
$\mathcal{P}_{a}$ the partition of $V(G)$ according to the distances from $a$.
That is, if $x,x^{\prime}$ are in the same set of $\mathcal{P}_{a}$, then
$d(a,x)=d(a,x^{\prime})$. To prove that $S=\{a,b\}$ is a vertex metric
generator in $G$, it suffices to show that $d(b,x)\neq d(b,x^{\prime})$ for
every pair of vertices $x,x^{\prime}$ from a common set of $\mathcal{P}_{a}$.
Proceeding by way of contradiction, if $d(b,x)=d(b,x^{\prime})$ then the
shortest path from $b$ to $x$ cannot contain a path from $b$ to $x^{\prime}$
and vice versa. This simplifies our consideration since $\Theta_{p,q,r}$
contains only two branching vertices (i.e., vertices of degree at least $3$).
Let us now consider each of the eight cases separately.

\medskip i) For the vertex $w_{r/2}\in S,$ we have
\[
\mathcal{P}_{w_{\frac{r}{2}}}=(\{w_{\frac{r}{2}}\},\{w_{i},w_{r-i}%
\}_{i=0}^{\frac{r}{2}-1},\{u_{i},v_{i},u_{p-i},v_{q-i}\}_{i=1}^{\lfloor
\frac{p}{2}\rfloor},\{v_{i},v_{q-i}\}_{i=\lfloor\frac{p}{2}\rfloor+1}%
^{\lfloor\frac{q}{2}\rfloor}).
\]
We have to show that the other vertex of $S,$ i.e. $v_{(q-1)/2},$
distinguishes all pairs of vertices from a common set of $\mathcal{P}%
_{w_{\frac{r}{2}}}.$ The first type of set in $\mathcal{P}_{w_{\frac{r}{2}}}$
which contains at least one pair of vertices is $\{w_{i},w_{r-i}\},$ so we
have to show that $w_{i}$ and $w_{r-i}$ are distinguished by $v_{(q-1)/2},$
and that follows from
\[
d(v_{\frac{q-1}{2}},w_{i})=i+\frac{q-1}{2}<i+\frac{q+1}{2}=d(w_{\frac{q-1}{2}%
},w_{r-i}).
\]
The next set from $\mathcal{P}_{w_{\frac{r}{2}}}$ to consider is of the type
$\{u_{i},v_{i},u_{p-i},v_{q-i}\},$ where we have%
\[
d(v_{\frac{q-1}{2}},v_{i})<d(v_{\frac{q-1}{2}},v_{q-i})<d(v_{\frac{q-1}{2}%
},u_{i})<d(y_{\frac{q-1}{2}},u_{p-i}),
\]
where the last two expressions have place only if $i\leq\lfloor\frac{p}%
{2}\rfloor$. Therefore, all pairs of vertices from that set are distinguished
by $v_{(q-1)/2}\in S$. Notice that the inequality covers also the last type of
set from $\mathcal{P}_{w_{\frac{r}{2}}}$. Also observe that we did not use the
fact that $p\leq q\leq r$ here, so the proof covers all cases when one of
$p,q,r$ is odd and at least $3$ and one of $p,q,r$ is even.

\medskip ii) Analogously as in i) we have
\[
\mathcal{P}_{w_{\frac{r}{2}}}=(\{w_{\frac{r}{2}}\},\{w_{i},w_{r-i}%
\}_{i=0}^{\frac{r}{2}-1},\{v_{i},v_{q-i}\}_{i=1}^{\frac{q}{2}-1},\{v_{\frac
{q}{2}}\}).
\]
It remains to show that $u$ distinguishes all pairs of vertices which belong
to a common set of $\mathcal{P}_{w_{\frac{r}{2}}}$. This is seen from
$d(u,w_{i})=i<i+1=d(u,w_{r-i})$ and $d(u,v_{i})=i<i+1=d(u,v_{q-i})$.

\medskip iii) We have
\[
\mathcal{P}_{w_{\frac{r}{2}}}=(\{w_{\frac{r}{2}}\},\{w_{i},w_{r-i}%
\}_{i=0}^{\frac{r}{2}-1},\{u_{i},v_{i},u_{p-i},v_{q-i}\}_{i=1}^{\frac{p}{2}%
},\{v_{i},v_{q-i}\}_{i=\frac{p}{2}+1}^{\frac{q}{2}}).
\]
(Observe that, the third set has just three vertices if $i=p/2$, and the last
set has just one vertex if $i=q/2$.) We show that $v_{1}\in S$ distinguishes
all pairs of vertices from a common set of $\mathcal{P}_{w_{\frac{r}{2}}}$.
Regarding set $\{w_{i},w_{r-i}\}$, notice that $d(v_{1},w_{i}%
)=i+1<1+p+i=d(v_{1},w_{r-i}).$ The next sets of $\mathcal{P}_{w_{\frac{r}{2}}%
}$ are of the form $\{u_{i},v_{i},u_{p-i},v_{q-i}\}$ where
\[
d(v_{1},v_{i})=i-1<i+1=d(v_{1},u_{i}),
\]
and assuming that $v_{1}$ does not distinguish the other possible pairs leads
to a contradiction, namely $d(v_{1},u_{i})=d(v_{1},u_{p-i})$ implies
$i+1=q-1+i$ and $q=2$, a contradiction; $d(v_{1},u_{i})=d(v_{1},v_{q-i})$
implies $i+1=q-i-1$ and $i=q/2-1$, but such $u_{i}$ exists only if $q\leq
p+2$, a contradiction; $d(v_{1},v_{i})=d(v_{1},u_{p-i})$ implies $i-1=p-i+1$
and $i=p/2+1$, but such $i$ is over the limit for this set; $d(v_{1}%
,v_{i})=d(v_{1},v_{q-i})$ implies $i-1=1+p+i$ or simplified $p=-2$, a
contradiction; $d(v_{1},u_{p-i})=d(v_{1},v_{q-i})$ implies $1+p-i=q-i-1$ and
$q=p+2$, a contradiction.

For the last set of $\mathcal{P}_{w_{\frac{r}{2}}}$ we have $d(v_{1}%
,v_{i})=i-1<q-i-1=d(v_{1},v_{i})$ whenever $i<q/2$, and for $i=q/2$ the set is
a singleton.

\medskip iv) For $v_{q/2}\in S$ we have
\[
\mathcal{P}_{v_{\frac{q}{2}}}=(\{v_{\frac{q}{2}}\},\{v_{i},v_{q-i}%
\}_{i=0}^{\frac{q}{2}-1},\{u_{i},w_{i},u_{p-i},w_{r-i}\}_{i=1}^{\frac{p}{2}%
},\{w_{i},w_{r-i}\}_{i=\frac{p}{2}+1}^{\frac{r}{2}}).
\]
Now we consider the distances from $w_{1}\in S$. Assuming $d(w_{1}%
,v_{i})=d(w_{1},v_{q-i})$ implies $i+1=p+1+i$ so $p=0$, a contradiction.

The next set to consider is of the form $\{u_{i},w_{i},u_{p-i},w_{r-i}\}$. We
have
\[
d(w_{1},w_{i})=i-1<\min\{d(w_{1},u_{i}),d(w_{1},u_{p-i}),d(w_{1},w_{r-i})\},
\]
which resolves three of the six possible pairs of vertices. For all other
possible pairs we will assume that they are not distinguished by $w_{1}$ and
show that it leads to contradiction. Namely, $d(w_{1},u_{i})=d(w_{1},u_{p-i})$
implies $i+1=1+q+i$ or simplified $q=0$, a contradiction; $d(w_{1}%
,u_{i})=d(w_{1},w_{r-i})$ implies $i+1=r-i-1$ which reduces to $i=r/2-1$, but
such $i$ exceeds the limit for this set since $r\geq p+4$; $d(w_{1}%
,u_{p-i})=d(w_{1},w_{r-i})$ implies $1+p-i=r-i-1$ which reduces to $r=p+2$, a contradiction.

For the last set of $\mathcal{P}_{v_{\frac{q}{2}}}$ if $w_{i}\ne w_{r-i}$ and
$d(w_{1},w_{i})=d(w_{1},w_{r-i})$, then $i-1=p+1+i$ and $p=-2$, a contradiction.

\medskip v) Partition for $v_{1}\in S$ is
\[
\mathcal{P}_{v_{1}}=(\{v_{1}\},\{u,v_{2}\},\{u_{i},v_{i+2},w_{i}\}_{i=1}%
^{p-1},\{v,w_{p}\},\{w_{p+1}\}).
\]
and for distances from $w_{1}\in S$ we have
\begin{align*}
d(w_{1},u)  &  =1<3=d(w_{1},v_{2}),\\
d(w_{1},w_{i})  &  =i-1<d(w_{1},u_{i})=i+1<d(w_{1},v_{i+2})=i+3,\\
d(w_{1},w_{p})  &  =p-1<p+1=d(w_{1},v).
\end{align*}

\medskip vi) For $w_{(r-1)/2}\in S$ we have
\[
\mathcal{P}_{w_{\frac{r-1}{2}}}=(\{w_{\frac{r-1}{2}}\},\{w_{i},w_{r-i-1}%
\}_{i=0}^{\frac{r-3}{2}},\{u_{1},v_{1},v\},\{u_{i},v_{i},u_{p-i+1}%
,v_{q-i+1}\}_{i=2}^{\frac{p+1}{2}},\{v_{i},v_{q-i+1}\}_{i=\frac{p+3}{2}%
}^{\frac{q+1}{2}}).
\]
Now consider the distances from $v_{1}\in S$. Assuming $d(v_{1},w_{i}%
)=d(v_{1},w_{r-i+1})$ implies $i+1=p+1+i+1$ which reduces to $p=-1$, a contradiction.

In the next set $\{u_{1},v_{1},v\}$ of $\mathcal{P}_{w_{\frac{r-1}{2}}}$ there
are three possible pairs of vertices, for which we have%
\[
d(v_{1},v_{1})=0<d(v_{1},u_{1})=2<d(v_{1},v)=p+1,
\]
where the last inequality holds if $p>1$, otherwise $u_{1}=v$ so there is no
pair to be distinguished.

The next set from $\mathcal{P}_{w_{\frac{r-1}{2}}}$ is of the type
$\{u_{i},v_{i},u_{p-i+1},v_{q-i+1}\}$, where we first have $d(v_{1}%
,v_{i})=i-1<i+1=d(v_{1},u_{i})$, so the pair $u_{i},v_{i}$ is distinguished by
$v_{1}\in S$. For all remaining pairs of vertices from that set, we will show
that assuming they are not distinguished by $s_{1}\in S$ leads to a
contradiction. If $d(v_{1},u_{i})=d(v_{1},u_{p-i+1})$ then $i+1=q-1+i-1$ and
$q=3$, a contradiction; if $d(v_{1},u_{i})=d(v_{1},v_{q-i+1})$ then
$i+1=q-i+1-1$ which reduces to $i=(q-1)/2$, but such $i$ exceeds the limit
since $q>p+2$; if $d(v_{1},v_{i})=d(v_{1},u_{p-i+1})$ then $i-1=1+p-i+1$ and
therefore $i=(p+3)/2$, but such $i$ exceeds the limit; if $d(v_{1}%
,v_{i})=d(v_{1},v_{q-i+1})$ then $i-1=1+p+i-1$ which reduces to $p=-1$, a
contradiction; finally if $d(v_{1},u_{p-i+1})=d(v_{1},v_{q-i+1})$ then
$1+p-i+1=q-i+1-1$ and therefore $q=p+2$, a contradiction.

As for the last type of set in $\mathcal{P}_{w_{\frac{r-1}{2}}}$, if $v_{i}\ne
v_{q-i+1}$ and $d(v_{1},v_{i})=d(v_{1},v_{q-i+1})$ then $i-1=1+p+i-1$ and so
$p=-1$, a contradiction.

\medskip vii) Observe that
\[
\mathcal{P}_{v_{\frac{q-1}{2}}}=(\{v_{\frac{q-1}{2}}\},\{v_{i},v_{q-i-1}%
\}_{i=0}^{\frac{q-3}{2}},\{u_{1},w_{1},v\},\{u_{i},w_{i},u_{p-i+1}%
,w_{r-i+1}\}_{i=2}^{\frac{p+1}{2}},\{w_{i},w_{r-i+1}\}_{i=\frac{p+3}{2}%
}^{\frac{r+1}{2}}).
\]
Now we consider the distances from $w_{1}\in S$. If $d(w_{1},v_{i}%
)=d(w_{1},v_{q-i-1})$ then $i+1=p+1+i+1$ and $p=-1$, a contradiction.

As for the set $\{u_{1},w_{1},v\}\in\mathcal{P}_{v_{\frac{q-1}{2}}},$ we have%
\[
d(w_{1},w_{1})=0<d(w_{1},u_{1})=2<d(w_{1},v)=r-1.
\]
So all three pairs of vertices from this set are distinguished by $w_{1}$.

For the next set of $\mathcal{P}_{v_{\frac{q-1}{2}}}$ we first have
\[
d(w_{1},w_{i})=i-1<\min\{d(w_{1},u_{i}),d(w_{1},u_{p-i+1}),d(w_{1}%
,w_{r-i+1})\},
\]
so $w_{1}$ distinguishes $w_{i}$ from all the other vertices in that set. If
$d(w_{1},u_{i})=d(w_{1},u_{p-i+1})$ then $i+1=1+q+i-1$ and $q=1$, a
contradiction. If $d(w_{1},u_{i})=d(w_{1},w_{r-i+1})$ then $i+1=r-i+1-1$ and
$i=(r-1)/2\geq(p+3)/2$, but such $i$ exceeds the limit. Finally,
$d(w_{1},u_{p-i+1})=d(w_{1},w_{r-i+1})$ implies $1+p-i+1=r-i+1-1$ which
reduces to $r=p+2$, a contradiction.

For the last set of $\mathcal{P}_{v_{\frac{q-1}{2}}}$, if $w_{i}\ne w_{r-i+1}$
and $d(w_{1},w_{i})=d(w_{1},w_{r-i+1})$ then $i-1=1+p+i-1$ and $p=-1$, a contradiction.

\medskip viii) Observe that
\[
\mathcal{P}_{v_{1}}=(\{v_{1}\},\{u,v_{2}\},\{u_{i},v_{i+2},w_{i}\}_{i=1}%
^{p-1},\{v,w_{p}\},\{w_{p+1}\}).
\]
Hence $\mathcal{P}_{v_{1}}$ (and also $\mathcal{P}_{w_{1}}$) does not depend
on the parity of $p$. So analogously as in case v) one can show that
$S=\{v_{1},w_{1}\}$ is a vertex metric generator in this case.
\end{proof}

\begin{figure}[h]
\begin{center}
$%
\begin{array}
[c]{llll}%
\text{i) \raisebox{-1\height}{\includegraphics[scale=0.6]{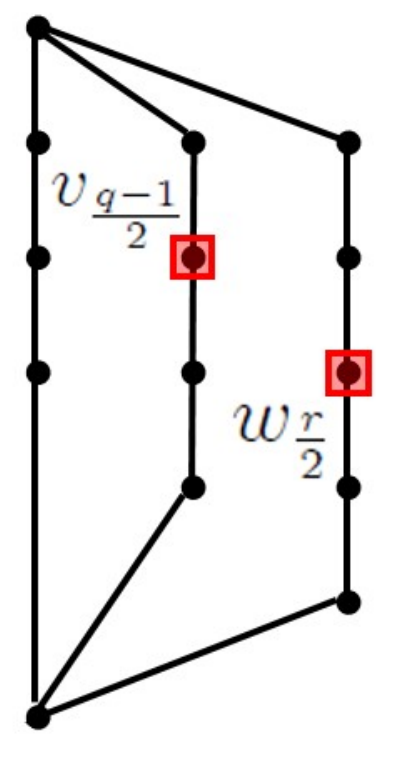}}} &
\text{ii) \raisebox{-1\height}{\includegraphics[scale=0.6]{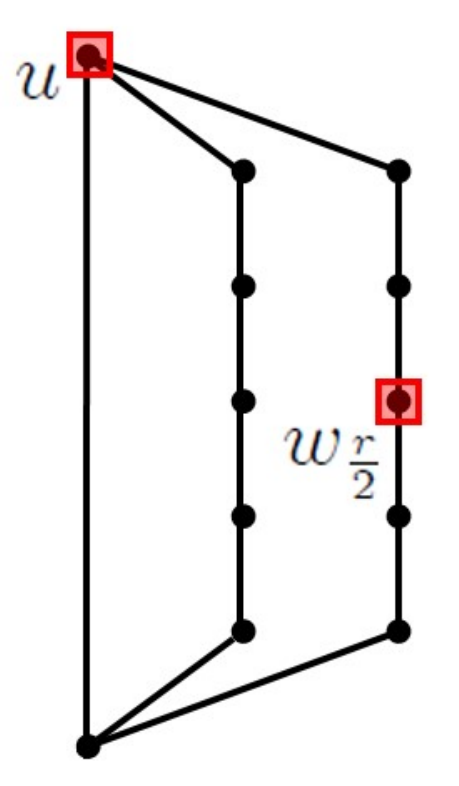}}} &
\text{iii) \raisebox{-1\height}{\includegraphics[scale=0.6]{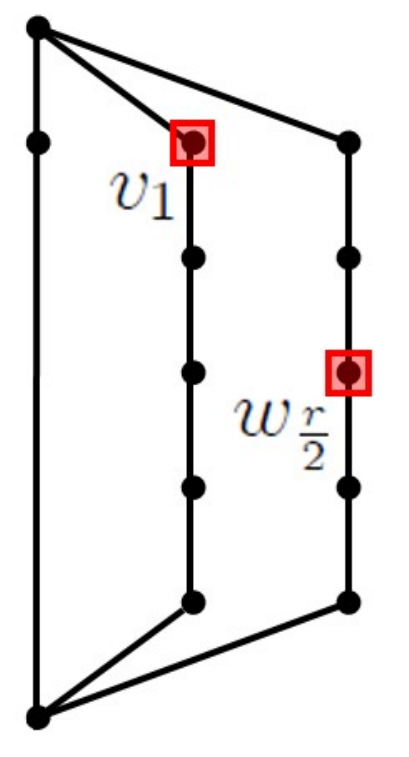}}} &
\text{iv) \raisebox{-1\height}{\includegraphics[scale=0.6]{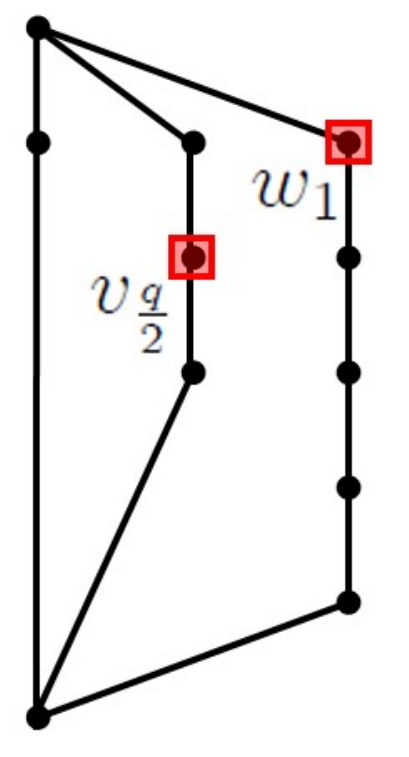}}}\\
\text{v) \raisebox{-1\height}{\includegraphics[scale=0.6]{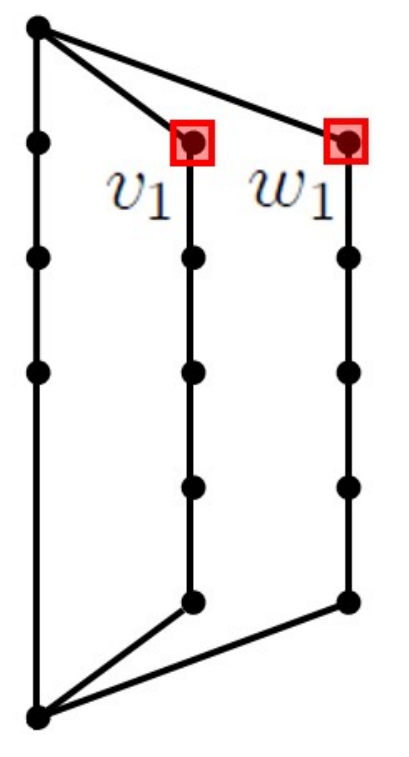}}} &
\text{vi) \raisebox{-1\height}{\includegraphics[scale=0.6]{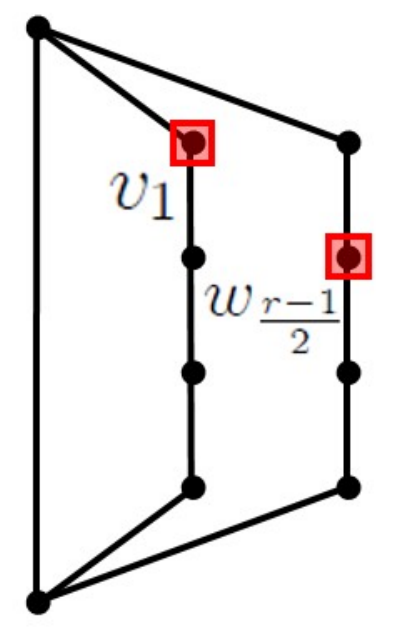}}} &
\text{vii) \raisebox{-1\height}{\includegraphics[scale=0.6]{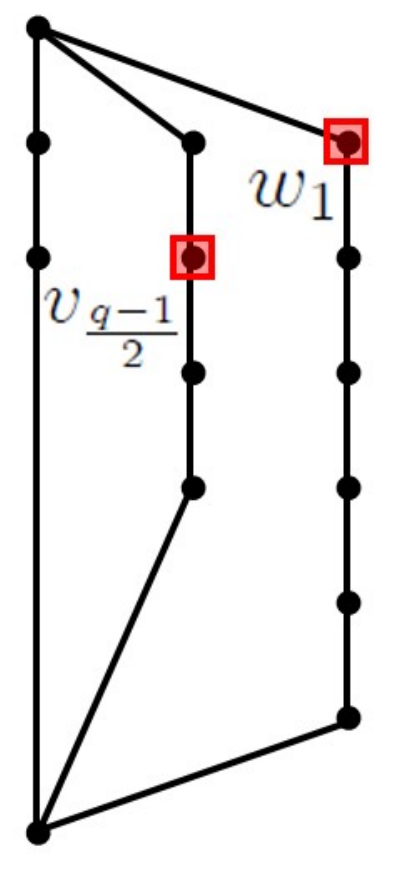}}} &
\text{viii) \raisebox{-1\height}{\includegraphics[scale=0.6]{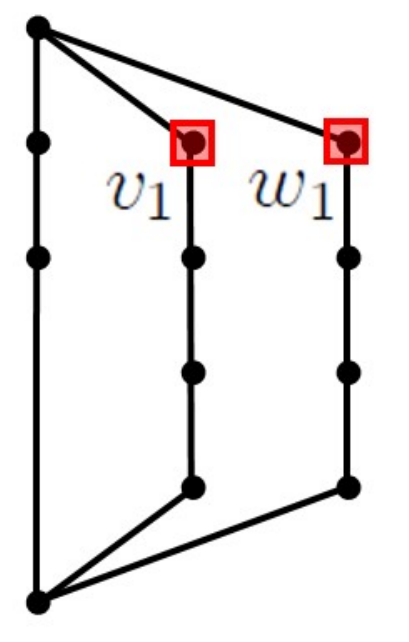}}}%
\end{array}
$
\end{center}
\caption{Vertex metric generators from Lemma \ref{Lemma_dim2}.}%
\label{Fig_VertexGenerators2}%
\end{figure}

Using Lemma~{\ref{Lemma_dim2}} we can prove that all $\Theta$-graphs not
mentioned in the previous section have metric dimension $2$.

\begin{theorem}
\label{thm:dim=2} Let $G$ be a $\Theta$-graph such that $G\not =\Theta
_{p,p,p}$ and $\Theta_{p,p,p+2}$ with $p\geq2.$ Then $\mathrm{dim}(G)=2.$
\end{theorem}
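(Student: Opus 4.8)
The plan is to combine a cheap lower bound with the explicit constructions of Lemma~\ref{Lemma_dim2}. For the lower bound, note that $G$ is not a path, since it has two vertices of degree $3$; as a connected graph has metric dimension $1$ if and only if it is a path, we get $\mathrm{dim}(G)\ge 2$. Everything else is a bookkeeping exercise: I would verify that every $\Theta$-graph $\Theta_{p,q,r}$ with $1\le p\le q\le r$ (and necessarily $q\ge 2$, since simplicity forbids two of the three $u$--$v$ paths from having length $1$), \emph{except} the two families $\Theta_{p,p,p}$ and $\Theta_{p,p,p+2}$ with $p\ge 2$, falls under one of the eight cases i)--viii) of Lemma~\ref{Lemma_dim2}. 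That lemma then supplies a vertex metric generator of size $2$, giving $\mathrm{dim}(G)\le 2$ and hence equality.

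To organize the coverage check I would split on the parities of $p,q,r$. If they are not all of the same parity, then at least one is even, and either some odd one is at least $3$ (case~i)) or every odd value equals $1$; in the latter situation simplicity forces exactly $p=1$ to be odd with $q,r$ even, which is case~ii). If $p,q,r$ all have the same parity, then $q\in\{p,p+2,p+4,\dots\}$ and $r\ge q$ of the same parity, so I would distinguish: $q\notin\{p,p+2\}$ (cases iii) and vi)); $q\in\{p,p+2\}$ with $r\ge p+4$ (cases iv) and vii)); and $q=r=p+2$ (cases v) and viii)). The only possibilities left over are $q=p$ with $r\in\{p,p+2\}$, i.e. precisely $\Theta_{p,p,p}$ and $\Theta_{p,p,p+2}$ — and here $p\ge 2$ automatically, because all three values are positive and share a parity, the case $p=1$ being vacuous as it would force $q\ge 3>p$. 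In the all-odd subcase I would also record $p\ge 1$ and, when $p=1$, use $q\ge 3$ so that $\{p,p+2\}=\{1,3\}$ is handled by the branches above, with $\Theta_{1,1,r}$ excluded by simplicity.

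The main (and essentially the only) thing to be careful about is making this parity case analysis genuinely exhaustive and correctly matched to the hypotheses of Lemma~\ref{Lemma_dim2}: the boundary between cases iv)/vii) and v)/viii) (the role of $r\ge p+4$ versus $r=p+2$), the separate treatment of $p=1$, and the simplicity constraint $q\ge 2$ that rules out $\Theta_{1,1,r}$. Once the coverage is confirmed, the theorem is immediate — $\mathrm{dim}(G)\ge 2$ always, and $\mathrm{dim}(G)\le 2$ for every $G$ outside the two excluded families by Lemma~\ref{Lemma_dim2}.
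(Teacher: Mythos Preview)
Your proposal is correct and follows essentially the same approach as the paper: both reduce the theorem to checking that the eight cases of Lemma~\ref{Lemma_dim2} exhaust all $\Theta_{p,q,r}$ apart from the two excluded families, via the same parity split on $p,q,r$. Your treatment is in fact slightly more explicit than the paper's, since you state the lower bound $\mathrm{dim}(G)\ge 2$ (via the path characterisation) which the paper leaves implicit.
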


\begin{proof}
It is sufficient to show that Lemma \ref{Lemma_dim2} includes all $\Theta
$-graphs distinct from $\Theta_{p,p,p}$ and $\Theta_{p,p,p+2}$. Cases iii)-v)
of this lemma obviously include all $\Theta$-graphs distinct from
$\Theta_{p,p,p}$ and $\Theta_{p,p,p+2}$ in which all three parameters $p$, $q$
and $r$ are even. Similarly, cases vi)-viii) of the same lemma include all
$\Theta$-graphs in which all three parameters are odd. It remains to show that
cases i)-ii) cover all $\Theta$-graphs in which $p$, $q$ and $r$ do not have a
same parity. In that case at least one of the parameters is odd. If none of
the parameters is equal to one, then Lemma \ref{Lemma_dim2}.i) covers the
cases. If there is parameter equal to $1$, then $p=1$ since $p\leq q\leq r$.
Since $G$ has no parallel edges, $q\geq2$. Hence if one of $q$ and $r$ is odd
then this parameter is at least $3$ and the other parameter is even, which is
covered by Lemma \ref{Lemma_dim2}.i) again. The only remaining case when $p=1$
and both $q$ and $r$ are even is covered by Lemma \ref{Lemma_dim2}.ii).
\end{proof}

As regards the motivating question for this investigation,
Theorem~{\ref{thm:dim=2}} yields the following corollary.

\begin{corollary}
Let $G$ be a $\Theta$-graph such that $G\not =\Theta_{p,p,p}$ and
$G\not =\Theta_{p,p,p+2}$. Then $\mathrm{dim}(G)<2c(G)-1$.
\end{corollary}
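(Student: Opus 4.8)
The plan is to obtain the corollary as an immediate consequence of Theorem~\ref{thm:dim=2}, once the quantity $2c(G)-1$ has been pinned down for $\Theta$-graphs. The first step is to compute the cyclomatic number of an arbitrary $\Theta$-graph $G$. By definition $G$ has exactly two vertices of degree $3$ and all remaining vertices of degree $2$, so the handshake lemma gives $2|E(G)| = 3\cdot 2 + 2(|V(G)|-2) = 2|V(G)|+2$, hence $|E(G)| = |V(G)|+1$ and therefore $c(G) = |E(G)| - |V(G)| + 1 = 2$. (Equivalently, one may simply observe that a $\Theta$-graph arises from a cycle by attaching one internally disjoint path, which raises the cyclomatic number from $1$ to $2$.) Thus $2c(G)-1 = 3$ for every $\Theta$-graph $G$.

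The second step is to invoke Theorem~\ref{thm:dim=2}. By hypothesis $G \neq \Theta_{p,p,p}$ and $G \neq \Theta_{p,p,p+2}$, and for simple $\Theta$-graphs these excluded families automatically have $p \geq 2$, so the hypotheses of Theorem~\ref{thm:dim=2} are met and it yields $\mathrm{dim}(G) = 2$. Combining this with the first step, $\mathrm{dim}(G) = 2 < 3 = 2c(G)-1$, which is exactly the claimed strict inequality, and the proof is complete.

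There is essentially no obstacle at this stage: all the substance of the corollary is carried by Theorem~\ref{thm:dim=2}, and behind it by the explicit two-element vertex metric generators constructed in Lemma~\ref{Lemma_dim2} together with the lower bound of Lemma~\ref{Lemma_vertexTheta}. The only point requiring a modicum of care is the bookkeeping that the case analysis in Lemma~\ref{Lemma_dim2} genuinely exhausts all $\Theta$-graphs outside the two excluded families (in particular the degenerate situations where some parameter equals $1$), but this verification is already carried out inside the proof of Theorem~\ref{thm:dim=2}, so no further work is needed here.
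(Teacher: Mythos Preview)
Your proposal is correct and matches the paper's approach: the paper does not even write out a proof for this corollary, merely stating that Theorem~\ref{thm:dim=2} yields it, and your argument makes explicit exactly what is implicit there (namely $c(G)=2$ so $2c(G)-1=3>2=\mathrm{dim}(G)$). One small remark: your aside that Lemma~\ref{Lemma_vertexTheta} lies behind the result is slightly off, since that lemma concerns only the excluded graphs $\Theta_{p,p,p}$ and $\Theta_{p,p,p+2}$ and plays no role in the strict inequality here; only Lemma~\ref{Lemma_dim2} (via Theorem~\ref{thm:dim=2}) is actually needed.
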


Now we consider the edge metric dimension of $\Theta$-graphs. We proceed
analogously as in the case of vertex metric dimension. The edge metric
generators from the following lemma are illustrated in
Figure~{\ref{Fig_EdgeGenerators2}}.

\begin{lemma}
\label{Lemma_edim2} Let $G=\Theta_{p,q,r},$ where $p\leq q\leq r$, and let $S$
be a set of vertices in $G$ defined in a following way:

\begin{enumerate}
\item[i)] if $p<q$, $r\geq3$ and $p+r$ is even, then $S=\{w_{(r-p)/2}%
,w_{(r+p)/2}\}$;

\item[ii)] if $p<q$, $r\geq p+3$ and $p+r$ is odd, then $S=\{w_{\lfloor
(r-p)/2\rfloor},w_{\lceil(r+p)/2\rceil}\}$;

\item[iii)] if $p<q$, $r=p+1$ and $(p,q,r)\neq(1,2,2)$, then $S=\{v_{1}%
,w_{1}\}$;

\item[iv)] if $p=q$ and $p\geq4$, then $S=\{u_{2},v_{1}\}$;

\item[v)] if $p=q$ and $r\geq p+3$, then $S=\{v_{1},w_{1}\}$.
\end{enumerate}

\noindent Then $S$ is an edge metric generator in $G$.
\end{lemma}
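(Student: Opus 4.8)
The plan is to run the same machinery as in the proof of Lemma~\ref{Lemma_dim2}, but with the vertex distance partition replaced by its edge analogue. For a vertex $a\in V(G)$ let $\mathcal{Q}_{a}$ be the partition of $E(G)$ in which two edges $e$ and $e'$ lie in the same class exactly when $d(a,e)=d(a,e')$. Then $S=\{a,b\}$ is an edge metric generator of $G$ if and only if the second vertex $b$ separates every pair of edges lying in a common class of $\mathcal{Q}_{a}$. Just as in the vertex case, because $\Theta_{p,q,r}$ has only the two branching vertices $u$ and $v$, whenever $d(b,e)=d(b,e')$ for two such edges a shortest walk from $b$ to $e$ cannot pass through $e'$ and conversely; this localises $e$ and $e'$ on the three paths and turns the verification into a handful of short length/parity equations. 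So for each of the five choices of $S$ the scheme is: write $\mathcal{Q}_{a}$ out explicitly (this is where the hypotheses on $p,q,r$ are used, to know which edges fall in which class), and then for every class containing two or more edges check that $b$ tells them apart, treating the few remaining ``mirror'' pairs by contradiction exactly as in cases i)--viii) of Lemma~\ref{Lemma_dim2}.

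I would start with cases i) and ii), where both vertices of $S$ sit on the longest path $P_{3}$. The point of the choices $w_{\lfloor(r-p)/2\rfloor}$ and $w_{\lceil(r+p)/2\rceil}$ is that the first lies at distance $\lfloor(r-p)/2\rfloor$ from $u$ along $P_{3}$ and the second at the same distance from $v$ along $P_{3}$; since $p<r$ both are interior vertices, and the parity of $p+r$ selects which of the two floor/ceiling forms is the correct one. From a single vertex of $P_{3}$ one cannot distinguish the edge of $P_{1}$ that is the $j$-th from $u$ from the corresponding $j$-th edge of $P_{2}$ unless that vertex lies far enough from $u$, past roughly the midpoint of $P_{3}$, so $w_{\lceil(r+p)/2\rceil}$ takes care of all these $P_{1}$-versus-$P_{2}$ confusions measured from $u$, while $w_{\lfloor(r-p)/2\rfloor}$ symmetrically handles those measured from $v$; the hypothesis $p<q$ is exactly what forces these two families of edge pairs apart in the first place, whereas for $p=q$ no vertex of $P_{3}$ could separate them. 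The edges of $P_{3}$ itself contribute two mirror edges per distance class and, on the even cycle $C_{13}$ (case i), one antipodal pair of edges in the middle, and these are separated because the two vertices of $S$ are distinct and not antipodal on $C_{13}$.

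Next I would treat the three small configurations iii), iv) and v), where $S$ has one vertex on each of two distinct paths: in iii) $G=\Theta_{p,p+1,p+1}$ with $G\neq\Theta_{1,2,2}$, in iv) $G=\Theta_{p,p,r}$ with $p\geq4$ so that $u_{2}$ is an interior vertex of $P_{1}$ distinct from $v$, and in v) $G=\Theta_{p,p,r}$ with $r\geq p+3$. For the relevant base vertex ($v_{1}$, $u_{2}$ and $v_{1}$ respectively) the partition $\mathcal{Q}_{a}$ has only a few types of class, and each contradiction is a one-line length or parity computation of the same flavour as case v) of Lemma~\ref{Lemma_dim2}; the degenerate subcases --- edges incident to $u$ or $v$, or a path of length one --- are immediate once listed.

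The main obstacle relative to the vertex argument is the extra bookkeeping created by the $\min$ in $d(a,e)=\min\{d(a,v),d(a,w)\}$ for $e=vw$: along one path two edges straddling $a$ merge into a single class, and on an even cycle there is a genuine antipodal edge pair at maximum distance, so one must confirm for each $S$ above that neither $a$ nor $b$ leaves any such pair undistinguished and that the ``detour through $u$ or through $v$'' alternative has been resolved in the correct direction in every class of $\mathcal{Q}_{a}$. Pinning down $\mathcal{Q}_{a}$ precisely for all admissible $p\leq q\leq r$ simultaneously --- especially near the branching vertices and at the smallest admissible parameter values --- is where the care lies; the individual distance inequalities are then routine.
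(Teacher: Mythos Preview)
Your proposal is correct and follows essentially the same approach as the paper: the paper defines the edge-distance partition $\mathcal{P}_a^e$ (your $\mathcal{Q}_a$) for one vertex of $S$, writes it out explicitly in each of the five cases, and then verifies by direct computation or one-line contradictions that the second vertex of $S$ separates every pair of edges within each class. Your discussion of the intuition behind the choices $w_{\lfloor(r-p)/2\rfloor}$ and $w_{\lceil(r+p)/2\rceil}$ and of the extra bookkeeping caused by the $\min$ in the edge distance is accurate, and the paper's proof is exactly the detailed execution of the plan you describe.
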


\begin{proof}
The proof is analogous to the proof of Lemma \ref{Lemma_dim2}. Let $a$ be a
vertex in $G$. By $\mathcal{P}_{a}^{e}$ we denote the partition of $E(G)$
according to the distances from $a$. To prove that $S=\{a,b\}$ is an edge
metric generator for $G$, it suffices to show that $d(b,e)\neq d(b,f)$ for
every pair of edges $e,f$ from a common set of $\mathcal{P}_{a}^{e}$. Also, to
abbreviate the notation, an edge $u_{i}u_{i+1}$ will be denoted by $u_{i}^{+}$
or $u_{i+1}^{-},$ and the similar notation will be used for edges
$v_{i}v_{i+1}$ and $w_{i}w_{i+1}$. We now consider each of the five cases separately.

\medskip i) Denote $a=(r-p)/2$ and $b=(r+p)/2$. Then for $w_{a}\in S$ we have
\[
\mathcal{P}_{w_{a}}^{e}=(\{w_{a-i}^{-},w_{a+i}^{+}\}_{i=0}^{a-1},\{u_{i}%
^{+},v_{i}^{+},w_{r-p+i}^{+}\}_{i=0}^{p-1},\{v_{p+i}^{+},v_{q-i}^{-}%
\}_{i=0}^{\lfloor\frac{q-p}{2}\rfloor}).
\]
In the next we suppose that $w_{b}$ has the same distance to a pair of edges
from a common set of $\mathcal{P}_{w_{a}}^{e}$ and we always come to a
contradiction. Here and in the next cases, the first distance is denoted by
$d_{1}$ and the second distance is denoted by $d_{2}$.

Let us first consider the set $\{w_{a-i}^{-},w_{a+i}^{+}\}$ from
$\mathcal{P}_{w_{a}}^{e}.$ If $d(w_{b},w_{a-i}^{-})=d(w_{b},w_{a+i}^{+})$ then
$d_{1}=d(w_{b},w_{a-i})$. Further $d_{2}=d(w_{b},w_{a+i})$ (otherwise
$d_{2}<d_{1}$) and so $d(w_{b},w_{a-i})=d(w_{b},w_{a+i})$. Consequently
$b-(a-i)=(a+i)-b$ and therefore $a=b$ which contradicts $p\geq1$.

Let us now consider the set $\{u_{i}^{+},v_{i}^{+},w_{r-p+i}^{+}%
\}\in\mathcal{P}_{w_{a}}^{e}$ and the distances from $w_{b}$ to the three
possible pairs of edges from this set. If $d(w_{b},u_{i}^{+})=d(w_{b}%
,v_{i}^{+})$, then $d_{1}=d(w_{b},u_{i+1})$ since $d_{1}<d(w_{a}%
,v)=d(w_{b},u)$. Analogously $d_{2}=d(w_{b},v_{i+1})$. Thus $p-(i+1)=q-({i+1)}%
$ and $p=q$, a contradiction. The next pair is $u_{i}^{+}$ and $w_{r-p+i}%
^{+},$ where assuming $d(w_{b},u_{i}^{+})=d(w_{b},w_{r-p+i}^{+})$ yields
$d_{1}=d(w_{b},u_{i+1})$ and $d_{2}=d(w_{b},w_{r-p+i+1})$. Thus
\[
r-\frac{r+p}{2}+p-(i+1)=\frac{r+p}{2}-(r-p+i+1)
\]
which reduces to $r=p$, a contradiction. The last pair is $v_{i}^{+}$ and
$w_{r-p+i}^{+},$ in which case $d(w_{b},v_{i}^{+})=d(w_{b},w_{r-p+i}^{+})$
implies $d_{2}>d(w_{b},v)=d(w_{a},u)$, and so $d_{2}=d(w_{b},w_{r-p+i+1})$ and
$d_{1}=d(w_{b},v_{i+1})$. This gives
\[
r-\frac{r+p}{2}+q-(i+1)=\frac{r+p}{2}-(r-p+i+1)
\]
and $r+q=2p$, a contradiction.

It remains to consider the set $\{v_{p+i}^{+},v_{q-i}^{-}\}\in\mathcal{P}%
_{w_{a}}^{e}$. Assuming $d(w_{b},v_{p+i}^{+})=d(w_{b},v_{q-i}^{-})$ yields
$d_{2}=d(w_{b},v_{q-i})=d(w_{b},v)+d(v,v_{q-i})$. However,
$d(\{u,v\},\{v_{p+i},v_{p+i+1}\})>d(v,v_{q-i})$ and $d(\{u,v\},w_{b})\geq
d(w_{b},v)$. So $d_{1}>d_{2}$, a contradiction.

\medskip ii) Since $r\geq p+3$, we have $\lfloor(r-p)/2\rfloor\geq
\lfloor3/2\rfloor=1$. And since $p\geq1$, we have $\lfloor(r-p)/2\rfloor
<\lceil(r+p)/2\rceil$. Hence $1\leq\lfloor(r-p)/2\rfloor<\lfloor r/2\rfloor$.
Denote $a=\lfloor(r-p)/2\rfloor$ and $b=\lceil(r+p)/2\rceil$. Then for
$w_{a}\in S$ we have
\[
\mathcal{P}_{w_{a}}^{e}=(\{w_{a-i}^{-},w_{a+i}^{+}\}_{i=0}^{a-1},\{u_{i}%
^{+},v_{i}^{+},w_{2a+i}^{+}\}_{i=0}^{p-1},\{v_{p}^{+},v_{q}^{-},w_{r-1}%
^{+}\},\{v_{p+i}^{+},v_{q-i}^{-}\}_{i=1}^{\lfloor\frac{q-p-1}{2}\rfloor}).
\]
For each of the sets from $\mathcal{P}_{w_{a}}^{e}$, we now show that all
possible pairs of edges from that set are distinguished by $w_{b}\in S$. Let
us first consider the set $\{w_{a-i}^{-},w_{a+i}^{+}\}$. Assuming
$d(w_{b},w_{a-i}^{-})=d(w_{b},w_{a+i}^{+})$, analogously as in i) we get $a=b$
which contradicts $p\geq1$.

Now consider $\{u_{i}^{+},v_{i}^{+},w_{2a+i}^{+}\}$. If $d(w_{b},u_{i}%
^{+})=d(w_{b},v_{i}^{+})$, then analogously as in i) we get $p=q$, a
contradiction. If $d(w_{b},u_{i}^{+})=d(w_{b},w_{2a+i}^{+})$, then
$d_{1}=d(w_{b},u_{i+1})$ and $d_{2}=d(w_{b},w_{2a+i+1})$. Thus
\[
r-\frac{r+p+1}{2}+p-(i+1)=\frac{r+p+1}{2}-(2\frac{r-p-1}{2}+i+1)
\]
which reduces to $r=p+2$, a contradiction. Finally, if $d(w_{b},v_{i}%
^{+})=d(w_{b},w_{2a+i}^{+})$, then $d_{1}=d(w_{b},v_{i+1})$ and $d_{2}%
=d(w_{b},w_{2a+i+1})$. Thus
\[
r-\frac{r+p+1}{2}+q-(i+1)=\frac{r+p+1}{2}-(2\frac{r-p-1}{2}+i+1)
\]
and hence $r+q=2p+2$, a contradiction.

For edges from $\{v_{p}^{+},v_{q}^{-},w_{r-1}^{+}\}$ we have $d(w_{b}%
,w_{r-1}^{+})=d(w_{b},w_{r-1})=r-b-1$ and $d(w_{b},v_{q}^{-})=d(w_{b},v)=r-b$,
so that $d(w_{b},w_{r-1}^{+})<d(w_{b},v_{q}^{-})$. If $d(w_{b},w_{r-1}%
^{+})=d(w_{b},v_{p}^{+})$ then $d_{2}=d(w_{b},v_{p})$. So $r-b-1=b+p$ and
consequently $r-p-1=2b=r+p+1$, a contradiction. Finally, if $d(w_{b},v_{q}%
^{-})=d(w_{b},v_{p}^{+})$ then $d_{2}=d(w_{b},v_{p})$. So $r-b=b+p$ and
consequently $r-p=2b=r+p+1$, a contradiction.

Finally, consider $\{v_{p+i}^{+},v_{q-i}^{-}\}$. Assuming $d(w_{b},v_{p+i}%
^{+})=d(w_{b},v_{q-i}^{-})$ yields
\begin{align*}
d_{2}  &  =d(w_{b},v_{q-i})=d(w_{b},v)+d(v,v_{q-i})\\
&  <\min\{d(\{u,v\},\{v_{p+i},v_{p+i+1}\})+d(w_{b},\{u,v\})\}\leq d_{1},
\end{align*}
a contradiction.

\medskip iii) Notice that in this case $G=\Theta_{p,p+1,p+1}$, where $p\geq2$.
For $v_{1}\in S$ the partition is
\[
\mathcal{P}_{v_{1}}^{e}=(\{v_{1}^{-},v_{1}^{+}\},\{u_{i}^{+},v_{i+2}^{+}%
,w_{i}^{+}\}_{i=0}^{p-2},\{u_{p-1}^{+},w_{p-1}^{+},w_{p+1}^{-}\}).
\]
First consider the set $\{v_{1}^{-},v_{1}^{+}\}$. Since $p\geq2$, we have
$d(w_{1},v_{1}^{-})=1<2=d(w_{1},v_{1}^{+})$, so $v_{1}^{-}$ and $v_{1}^{+}$
are distinguished by $w_{1}\in S$.

Now consider $\{u_{i}^{+},v_{i+2}^{+},w_{i}^{+}\}$. Since
\[
d(w_{1},w_{i}^{+})\leq i<d(w_{1},u_{i}^{+})=i+1<i+2\leq d(w_{1},v_{i+2}^{+}),
\]
all three pairs are distinguished by $w_{1}\in S.$

Finally, for $\{u_{p-1}^{+},w_{p-1}^{+},w_{p+1}^{-}\}$ we have%
\[
d(w_{1},w_{p-1}^{+})=p-2<d(w_{1},w_{p+1}^{-})=p-1<d(w_{1},u_{p-1}^{+})=p.
\]

\medskip iv) Observe that
\[
\mathcal{P}_{v_{1}}^{e}=(\{v_{1}^{-},v_{1}^{+}\},\{u_{i}^{+},v_{i+2}^{+}%
,w_{i}^{+}\}_{i=0}^{p-3},\{u_{p-2}^{+},u_{p}^{-},w_{p-2}^{+},w_{r}%
^{-}\},\{w_{p-1+i}^{+},w_{r-1-i}^{-}\}_{i=0}^{\lceil\frac{r-p}{2}\rceil}).
\]
First, for the unique pair from $\{v_{1}^{-},v_{1}^{+}\}$ it holds that
$d(u_{2},v_{1}^{-})=2<d(u_{2},v_{1}^{+})=3$ if $p\geq4$, so it is
distinguished by $u_{2}$.

Next, consider $\{u_{i}^{+},v_{i+2}^{+},w_{i}^{+}\}$. Suppose that
$d(u_{2},u_{i}^{+})=d(u_{2},v_{i+2}^{+})$. If $i\ge2$ then $d_{1}=i-2$ and
consequently $d_{2}=i+4$, a contradiction. Hence $0\le i\le2$ and
\[
d(u_{2},u_{i}^{+})=d(u_{2},u_{i+1})=1-i<d(u_{2},v_{i+2}^{+})=d(u_{2}%
,v_{i+3})=p-2+p-(i+3),
\]
a contradiction. For the second pair $u_{i}^{+}$ and $w_{i}^{+}$, since $i\leq
p-3$ we have
\[
d(u_{2},u_{i}^{+})\leq(i-2)+3<i+2=d(u_{2},w_{i})=d(u_{2},w_{i}^{+}).
\]
For the last pair $v_{i+2}^{+}$ and $w_{i}^{+},$ we assume that $d(u_{2}%
,v_{i+2}^{+})=d(u_{2},w_{i}^{+})$. We distinguish three subcases:

\begin{enumerate}
\item[-] if $0\leq i\leq p-5$ then $d_{1}=d(u_{2},v_{i+2})=i+4>i+2=d(u_{2}%
,w_{i})=d_{2}$;

\item[-] if $i=p-4$ then $d_{1}=d(u_{2},v_{i+3})=p-1>p-2=d(u_{2},w_{i})=d_{2}$;

\item[-] if $i=p-3$ then $d_{1}=d(u_{2},v_{i+3})=p-2<p-1=d(u_{2},w_{i})=d_{2}$.
\end{enumerate}

Now we consider the set $\{u_{p-2}^{+},u_{p}^{-},w_{p-2}^{+},w_{r}^{-}\}$. We
have
\[
d(u_{2},u_{p-2}^{+})=p-4<d(u_{2},u_{p}^{-})=p-3< d(u_{2},w_{r}^{-}%
)=p-2<d(u_{2},w_{p-2}^{+})\geq p-1,
\]
where the last inequality is an equality only if $r=p$.

Finally, for $\{w_{p-1+i}^{+},w_{r-1-i}^{-}\}$ suppose that $d(u_{2}%
,w_{p-1+i}^{+})=d(u_{2},w_{r-1-i}^{-})$. Then $d_{2}=d(u_{2},w_{r-1-i})$ and
so $d_{1}=d(u_{2},w_{p-1+i})$. Thus $2+p-1+i=p-2+r-(r-1-i)$ and $1=-1$, a contradiction.

\medskip v) Observe that
\[
\mathcal{P}_{v_{1}}^{e}=(\{v_{1}^{-},v_{1}^{+}\},\{u_{i}^{+},v_{i+2}^{+}%
,w_{i}^{+}\}_{i=0}^{p-3},\{u_{p-2}^{+},u_{p}^{-},w_{p-2}^{+},w_{r}%
^{-}\},\{w_{p-1+i}^{+},w_{r-1-i}^{-}\}_{i=0}^{\lceil\frac{r-p}{2}\rceil}).
\]
First, consider the set $\{v_{1}^{-},v_{1}^{+}\}$. Since $r\geq4$, we have
$d(w_{1},v_{1}^{-})=1<2=d(w_{1},v_{1}^{+})$.

Next, consider the set $\{u_{i}^{+},v_{i+2}^{+},w_{i}^{+}\}$. We have
\[
d(w_{1},v_{i+2}^{+})=i+3>d(w_{1},u_{i}^{+})=i+1>d(w_{1},w_{i}^{+})\leq i,
\]
where the last inequality is an equality only if $i=0$ and the first equality
is a consequence of $r\geq p+3$.

Now consider the set $\{u_{p-2}^{+},u_{p}^{-},w_{p-2}^{+},w_{r}^{-}\}$. We
have
\[
d(w_{1},w_{p-2}^{+})=p-3<d(w_{1},u_{p-2}^{+})=p-1< d(w_{1},u_{p}%
^{-})=p<d(w_{1},w_{r}^{-})=p+1,
\]
where the last inequality holds since $r\geq p+3$.

Finally, consider the set $\{w_{p-1+i}^{+},w_{r-1-i}^{-}\}$. If $d(w_{1}%
,w_{p-1+i}^{+})=d(w_{1},w_{r-1-i}^{-})$, then $d_{1}=d(w_{1},w_{p-1+i})$ and
so $d_{2}=d(w_{1},w_{r-1-i})$. Thus $p-1+i-1=1+p+r-(r-1-i)$ and $-2=2$, a
contradiction. This concludes the proof.
\end{proof}

\begin{figure}[h]
\begin{center}
$%
\begin{array}
[c]{lll}%
\text{i) \raisebox{-1\height}{\includegraphics[scale=0.6]{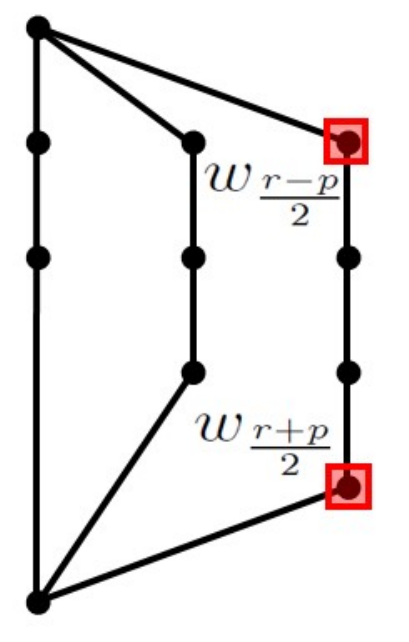}}} &
\text{ii) \raisebox{-1\height}{\includegraphics[scale=0.6]{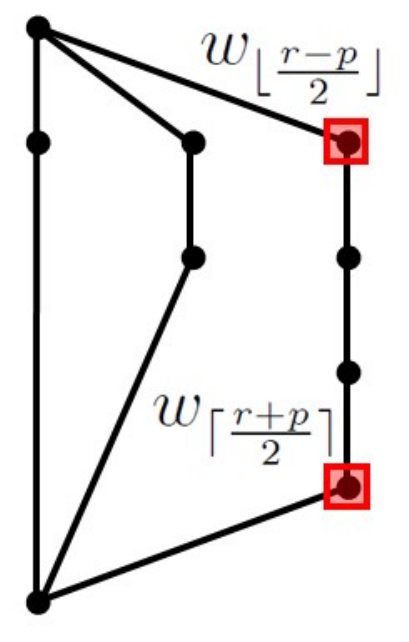}}} &
\text{iii) \raisebox{-1\height}{\includegraphics[scale=0.6]{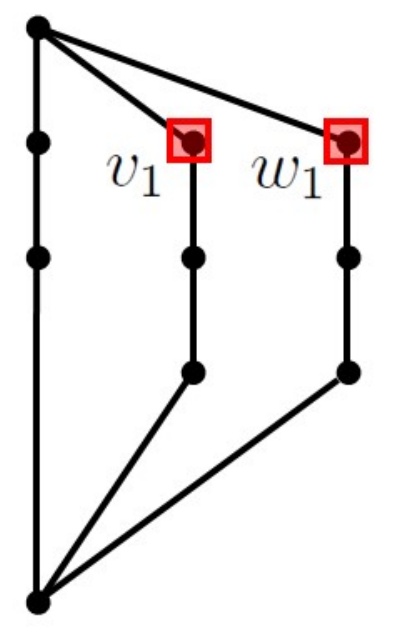}}}\\
\text{iv) \raisebox{-1\height}{\includegraphics[scale=0.6]{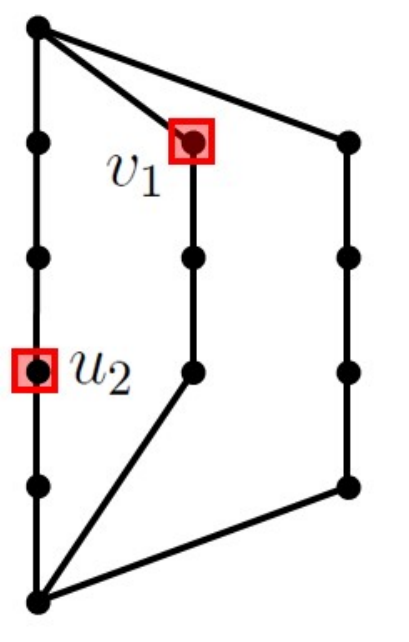}}} &
\text{v) \raisebox{-1\height}{\includegraphics[scale=0.6]{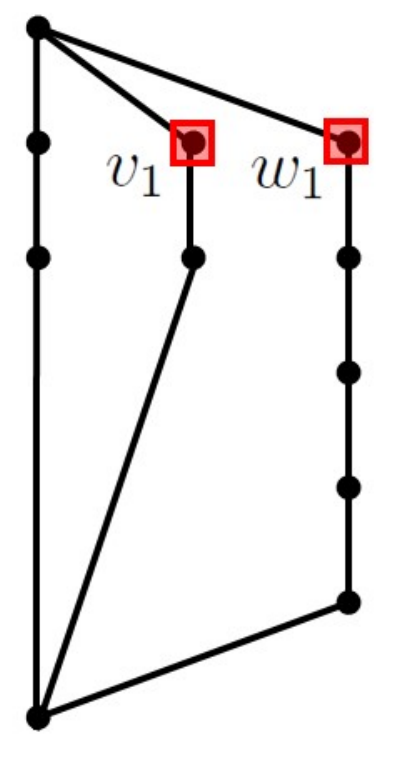}}} &
\end{array}
$
\end{center}
\caption{Edge metric generators from Lemma \ref{Lemma_edim2}.}%
\label{Fig_EdgeGenerators2}%
\end{figure}

The following statement is a consequence of Lemma~{\ref{Lemma_edim2}}.

\begin{theorem}
\label{thm:edim=2} Let $G$ be a $\Theta$-graph such that $G\not =%
\Theta_{1,2,2}$ and $\Theta_{p,p,q}$ for $2\leq p\leq3$ and $p\leq q\leq p+2$.
Then $\mathrm{edim}(G)=2$.
\end{theorem}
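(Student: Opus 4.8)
The plan is to mirror the proof of Theorem~\ref{thm:dim=2}. A connected graph has edge metric dimension $1$ if and only if it is a path, and a $\Theta$-graph is never a path, so $\mathrm{edim}(G)\ge 2$ holds for every $\Theta$-graph. Hence it suffices to produce an edge metric generator of size $2$ for every $\Theta_{p,q,r}$ (with $p\le q\le r$) outside the stated exceptions, and for this we only need to check that the five cases of Lemma~\ref{Lemma_edim2} together exhaust exactly those graphs; this yields $\mathrm{edim}(G)\le 2$, which together with the lower bound gives $\mathrm{edim}(G)=2$.

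Thus the argument is a bookkeeping verification, which I would organize by splitting on whether $p<q$ or $p=q$. If $p<q$ and $r\ge p+3$, then (noting $r\ge p+3\ge 4\ge 3$, so the hypothesis $r\ge3$ is met) case~i) applies when $p+r$ is even and case~ii) applies when $p+r$ is odd. If $p<q$ and $r=p+2$, then $p+r$ is even and $r\ge 3$, so case~i) applies. If $p<q$ and $r=p+1$, then $p<q\le r$ forces $q=p+1$, and case~iii) applies unless $(p,q,r)=(1,2,2)$; the value $r=p$ is impossible since $r\ge q>p$. Hence among $\Theta$-graphs with $p<q$, the only one not covered by Lemma~\ref{Lemma_edim2} is $\Theta_{1,2,2}$. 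If instead $p=q$, then simplicity of $G$ forces $p=q\ge 2$; case~iv) covers all such graphs with $p\ge 4$ and case~v) covers all with $r\ge p+3$, so the uncovered graphs are precisely those with $p\in\{2,3\}$ and $r\le p+2$, i.e. $\Theta_{p,p,r}$ with $p\in\{2,3\}$ and $r\in\{p,p+1,p+2\}$ --- exactly the family excluded in the statement.

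There is no genuinely hard step; the only care needed is in not skipping a boundary value. In particular, one must verify that the hypothesis $r\ge 3$ of cases~i) and~ii) is automatically satisfied whenever those cases are invoked, keep track of how $p=q$ and $p=1$ interact with the simplicity of $G$ (so that, e.g., multigraph situations like $\Theta_{1,1,r}$ never arise and $\Theta_{1,2,2}$ is isolated as the sole exception with $p<q$), and confirm that the leftover list --- $\Theta_{1,2,2}$ together with the six small graphs $\Theta_{p,p,r}$ with $p\in\{2,3\}$ --- coincides verbatim with the exception list of the theorem.
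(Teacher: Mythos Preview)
Your proposal is correct and takes essentially the same approach as the paper: both arguments reduce the claim to checking that the five cases of Lemma~\ref{Lemma_edim2} exhaust all $\Theta_{p,q,r}$ outside the listed exceptions, splitting first on $p<q$ versus $p=q$. The only cosmetic differences are that you sub-split the $p<q$ case by the value of $r-p$ rather than directly by the parity of $p+r$, and you make explicit the lower bound $\mathrm{edim}(G)\ge 2$, which the paper leaves implicit.
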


\begin{proof}
First suppose that $p<q$. If $p+r$ is even then $r\ge3$, so
Lemma~{\ref{Lemma_edim2}}.i) covers this case. On the other hand if $p+r$ is
odd then $r\ge p+1$, so Lemma~{\ref{Lemma_edim2}}.ii) and
Lemma~{\ref{Lemma_edim2}}.iii) cover all cases except $\Theta_{1,2,2}$.

Now suppose that $p=q$. Then Lemma~{\ref{Lemma_edim2}}.v) covers all cases
except $\Theta_{p,p,p}$, $\Theta_{p,p,p+1}$ and $\Theta_{p,p,p+2}$. These
remaining cases are covered by Lemma~{\ref{Lemma_edim2}}.iv) when $p\ge4$.
Hence, uncovered cases are $\Theta_{2,2,2}$, $\Theta_{2,2,3}$, $\Theta
_{2,2,4}$, $\Theta_{3,3,3}$, $\Theta_{3,3,4}$ and $\Theta_{3,3,5}$.
\end{proof}

Supporting our motivation, Theorem~{\ref{thm:edim=2}} yields the following corollary.

\begin{corollary}
Let $G$ be a $\Theta$-graph such that $G\not =\Theta_{1,2,2}$ and
$\Theta_{p,p,q}$ for $2\leq p\leq3$ and $p\leq q\leq p+2$. Then $\mathrm{edim}%
(G)<2c(G)-1.$
\end{corollary}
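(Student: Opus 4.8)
The plan is to derive this corollary immediately from Theorem~\ref{thm:edim=2} together with the fact, already noted in the paper, that every $\Theta$-graph has cyclomatic number $2$. Concretely, for $G=\Theta_{p,q,r}$ we have $|E(G)|=p+q+r$ and $|V(G)|=p+q+r-1$, so
\[
c(G)=|E(G)|-|V(G)|+1=2,
\]
and hence $2c(G)-1=3$. The families of $\Theta$-graphs excluded in the hypothesis of this corollary --- namely $\Theta_{1,2,2}$ and $\Theta_{p,p,q}$ with $2\le p\le 3$ and $p\le q\le p+2$ --- are precisely those excluded in Theorem~\ref{thm:edim=2}. For every other $\Theta$-graph that theorem yields $\mathrm{edim}(G)=2$, and therefore
\[
\mathrm{edim}(G)=2<3=2c(G)-1,
\]
which is the asserted strict inequality.

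There is essentially no obstacle here: all the real work has been carried out in Lemma~\ref{Lemma_edim2} and Theorem~\ref{thm:edim=2}, where explicit two-element edge metric generators are exhibited for every $\Theta$-graph lying outside the finitely many exceptional cases. The only point worth verifying while writing up the proof is that the list of exceptional $\Theta$-graphs appearing in this corollary matches verbatim the list appearing in Theorem~\ref{thm:edim=2}, so that no further case analysis is needed; a glance at the two statements confirms that it does. Hence the proof is a one-line combination of the cyclomatic computation with Theorem~\ref{thm:edim=2}.
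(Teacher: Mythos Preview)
Your proposal is correct and is exactly the intended argument: the paper states this corollary immediately after Theorem~\ref{thm:edim=2} without a separate proof, relying on the already-noted fact that $c(G)=2$ for every $\Theta$-graph, so that $\mathrm{edim}(G)=2<3=2c(G)-1$. There is nothing to add.
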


\section{Further work}

In this paper we investigated Conjecture \ref{Con_dim_leaves} (resp.
Conjecture \ref{Con_edim_leaves}), which states that the vertex (resp. the
edge) metric dimension of a graph $G\not =C_{n}$ with $\delta(G)\geq2$ is
bounded above by $2c(G)-1$. It was established in \cite{SedSkreLeaflessCacti}
that the conjectures hold for cacti without leaves, and that for other
leafless graphs the problem reduces to $2$-connected graphs, i.e., if the
conjectures hold for $2$-connected graps distinct from a cycle then they hold
in general. In this paper we considered $\Theta$-graphs, since they are the
most simple $2$-connected graps distinct from cycles. We established that
Conjectures \ref{Con_dim_leaves} and \ref{Con_edim_leaves} hold on this class
of graphs and we characterized all $\Theta$-graphs for which the upper bound
is attained.

Besides $\Theta$-graphs attaining the upper bound $2c(G)-1$, it was previously
established that the same upper bound is also attained by metric dimensions of
some leafless cacti. To be more precise, a \emph{daisy} graph is any graph
consisting of at least two cycles which all share the same vertex. A cycle in
a daisy graph is also called a \emph{petal}. Now, it was established that
$\mathrm{dim}(G)$ attains the bound $2c(G)-1$ if $G$ is a daisy graph without
odd petals, and that $\mathrm{edim}(G)$ reaches the same bound for any daisy
graph $G$. We expect that these graphs are the only graphs with $\delta
(G)\geq2$ whose metric dimensions reach the bound. So we conclude the paper by
stating the following two conjectures.

\begin{conjecture}
\label{Cor_dim_equality}Let $G$ be a connected graph with $\delta(G)\geq2$.
Then $\mathrm{dim}(G)=2c(G)-1$ if and only if $G$ is a daisy graph without odd
petals, $G=\Theta_{p,p,p}$ or $G=\Theta_{p,p,p+2}.$
\end{conjecture}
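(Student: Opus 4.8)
The plan is to prove both implications, with the forward (\emph{if}) direction following from results already available and the reverse (\emph{only if}) direction carrying the real weight. For the \emph{if} direction, when $G=\Theta_{p,p,p}$ or $\Theta_{p,p,p+2}$ the equality $\mathrm{dim}(G)=2c(G)-1$ is exactly what Section~2 establishes (since there $L(G)=0$ and $c(G)=2$), while when $G$ is a daisy without odd petals the equality is the extremal characterization for leafless cacti proved in \cite{SedSkreLeaflessCacti}, as every daisy is a cactus. Hence I would devote the argument entirely to showing that any $G$ with $\delta(G)\geq2$ attaining $\mathrm{dim}(G)=2c(G)-1$ must be one of the listed graphs.

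The first step is a structural reduction via Lemma~\ref{Cor_attained}. If $\mathrm{dim}(G)=2c(G)-1$, then neither sufficient condition for strict inequality there can hold: every block $G_i$ distinct from a cycle must itself satisfy $\mathrm{dim}(G_i)=2c(G_i)-1$, and no two non-trivial blocks can be vertex-disjoint. In the block--cut tree, pairwise-intersecting blocks meet in a single cut vertex, so all non-trivial blocks of $G$ share one common vertex $v$; together with $\delta(G)\geq2$ this also excludes bridges, so $G$ is a bouquet of $2$-connected blocks glued at $v$ (possibly a single block, in which case $G$ is itself $2$-connected). Since any path leaving a block can re-enter it only through $v$, each block is an isometric subgraph of $G$, so a vertex metric generator of $G$ must resolve every block and the problem decouples along the blocks.

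From here the argument splits. If all blocks are cycles, then $G$ is a cactus, so the leafless-cactus characterization of \cite{SedSkreLeaflessCacti} applies and yields equality precisely when there are at least two petals and all of them are even, i.e.\ $G$ is a daisy without odd petals (a lone cycle gives $\mathrm{dim}=2>1=2c(G)-1$ and is excluded). If instead some block $B$ has $c(B)\geq2$, I must show $\mathrm{dim}(G)<2c(G)-1$ unless $G=B$ is a single extremal $\Theta$-graph. For a single $2$-connected block this reduces to characterizing extremal $2$-connected graphs: when $c(B)=2$ the block is a $\Theta$-graph and Theorem~\ref{thm:dim=2} leaves exactly $\Theta_{p,p,p}$ and $\Theta_{p,p,p+2}$; when $B$ is glued to a further non-trivial block I would use the isometry of blocks to assemble, from optimal block generators, a generator of $G$ of size strictly below $2c(G)-1$, thereby forbidding any mixture of an extremal $\Theta$-block with other blocks.

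The decisive obstacle is the single $2$-connected block with $c(B)\geq3$, which the present paper does not treat: one must prove that \emph{no} such graph attains the bound. The natural route is induction via an ear decomposition---remove a carefully chosen ear to obtain a $2$-connected $B'$ with $c(B')=c(B)-1$, lift an optimal generator of $B'$ by at most one landmark on the ear, and argue the result resolves $B$, forcing $\mathrm{dim}(B)\leq 2c(B)-2$ unless the ear pattern is exactly that of a $\Theta$-graph. The hard part is that deleting an ear can collapse distances and destroy $2$-connectivity, so identifying which pairs become unresolved, and verifying that a single added landmark repairs them, demands a distance analysis on each residual cycle analogous to, but considerably heavier than, the case work of Lemmas~\ref{Lemma_vertexTheta} and~\ref{Lemma_dim2}. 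This step is precisely why the statement is, for now, only conjectured.
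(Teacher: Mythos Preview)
Your plan is essentially the reduction the paper actually carries out in the proposition immediately following the conjecture: assume the statement for $2$-connected graphs and deduce it in general via Lemma~\ref{Cor_attained}. You correctly isolate the genuine obstruction, namely a single $2$-connected block with $c(B)\geq 3$, and you are right that this is exactly why the statement remains a conjecture.

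There is, however, a real gap in your treatment of the ``mixture'' case, where an extremal $\Theta$-block $B$ shares the cut vertex $v$ with further non-trivial blocks. Isometry of blocks only guarantees that a union of block generators resolves pairs \emph{inside} each block; it says nothing about pairs straddling two blocks, and a naive union of optimal generators has size $\sum_i \mathrm{dim}(G_i)=2c(G)-p$ (with $p$ the number of non-cycle blocks), which is not strictly below $2c(G)-1$ when $p=1$. The paper's argument hinges on a tool you do not invoke: Lemma~\ref{Lemma_vertexGenerators}, which says that for an extremal $\Theta$-graph and \emph{any} prescribed vertex $a$ there is a $3$-element generator containing $a$. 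Choosing $a=v$ in every $\Theta$-block lets one drop $v$ from each block generator, and then a separate ``critically incident'' analysis shows that at most $q-1$ additional vertices repair all cross-block ambiguities, yielding a generator of size at most $2c(G)-p-1<2c(G)-1$ whenever $p\geq 1$. Without this anchored-generator lemma and the cross-block count, your mixture step does not go through.
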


\begin{conjecture}
\label{Cor_edim_equality}Let $G$ be a connected graph with $\delta(G)\geq2$.
Then $\mathrm{edim}(G)=2c(G)-1$ if and only if $G$ is a daisy graph,
$G=\Theta_{1,2,2}$ or $G=\Theta_{p,p,q}$ with $2\leq p\leq3$ and $p\leq q\leq
p+2.$
\end{conjecture}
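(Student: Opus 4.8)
The plan is to prove the two implications separately. The ``if'' direction requires nothing new: if $G$ is a daisy graph then $\mathrm{edim}(G)=2c(G)-1$ by the characterization of extremal leafless cacti of \cite{SedSkreLeaflessCacti}, while if $G=\Theta_{1,2,2}$ or $G=\Theta_{p,p,q}$ with $2\le p\le 3$ and $p\le q\le p+2$ then $\mathrm{edim}(G)=2c(G)-1$ by the corollary for the edge metric dimension established in Section~2. Hence the content of the statement lies entirely in the ``only if'' direction, which I would reduce to the case of $2$-connected graphs.

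So assume $\delta(G)\ge 2$ and $\mathrm{edim}(G)=2c(G)-1$. First I would read off structural consequences from Lemma~\ref{Cor_attained}. Deleting a bridge from a leafless graph leaves two components, each of which is not a tree (it has at least two vertices and at most one vertex of degree $1$ in $G$), hence each contains a cycle, so $G$ would have two vertex-disjoint non-trivial blocks and could not be extremal; therefore $G$ is bridgeless and every block of $G$ is $2$-connected. Again by Lemma~\ref{Cor_attained} no two blocks of $G$ are vertex-disjoint, and since the block--cut tree is a tree this forces either that $G$ itself is $2$-connected, or that all blocks of $G$ share a single common vertex. In the second case, if every block is a cycle then $G$ is by definition a daisy graph; and if some block $H$ is not a cycle, then $H$ is $2$-connected, distinct from a cycle, and by Lemma~\ref{Cor_attained} satisfies $\mathrm{edim}(H)=2c(H)-1$, so by the $2$-connected case treated next $H$ is one of the listed $\Theta$-graphs with $c(H)=2$; a short further argument --- a combining estimate for two graphs identified at a single vertex, or a direct construction of an edge metric generator of size at most $2c(G)-2$ --- then shows that $H$ cannot have any further block attached, so $G=H$. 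This reduces the statement to $2$-connected graphs.

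The core claim is then: a $2$-connected graph $G\neq C_n$ satisfies $\mathrm{edim}(G)=2c(G)-1$ if and only if $G=\Theta_{1,2,2}$ or $G=\Theta_{p,p,q}$ with $2\le p\le 3$ and $p\le q\le p+2$. Since the bound is known to be strict whenever $\delta(G)\ge 3$ \cite{SedSkreLeaflessCacti}, an extremal $2$-connected graph has a vertex of degree $2$, hence admits an open ear decomposition in which the last ear is a thread between two branch vertices. I would argue by induction on $c(G)$: for $c(G)=2$ the graph $G$ is a $\Theta$-graph, and Theorem~\ref{thm:edim=2} together with the Proposition of Section~2 give exactly the listed graphs; for $c(G)\ge 3$ the aim is to show $\mathrm{edim}(G)\le 2c(G)-2$, by deleting the last ear to obtain a graph $G'$ with $c(G')=c(G)-1$ (suppressing degree-$2$ vertices, or re-choosing the ear, if the deletion breaks $2$-connectivity), bounding $\mathrm{edim}(G')$, and then proving that re-inserting the ear raises the slack $2c(G)-1-\mathrm{edim}(G)$, so that this slack is already positive once $c(G)\ge 3$.

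The hardest point, and the one where the statement is currently open, is precisely this inductive step: no bound $\mathrm{edim}(G)\le 2c(G)-1$ is known for $2$-connected graphs with $\delta(G)=2$ other than $\Theta$-graphs --- this is Conjecture~\ref{Con_edim_leaves} restricted to $2$-connected graphs --- so even obtaining $\mathrm{edim}(G)\le 2c(G)-2$ for $c(G)\ge 3$ seems to require first settling that conjecture on $2$-connected graphs and only then sharpening it to rule out equality. The natural tool is the distance-partition technique used in the proofs of Lemmas~\ref{Lemma_dim2} and~\ref{Lemma_edim2}: pick a candidate generator adapted to the ear decomposition, compute the partition $\mathcal{P}_a^{e}$ of $E(G)$ induced by one of its vertices, and verify that the remaining vertices separate every class. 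The difficulty is making this argument work uniformly over all $2$-connected graphs, and in particular excluding the possibility that some sporadic dense graph of small cyclomatic number behaves, for the purpose of edge-distinguishing, like a $\Theta$-graph.
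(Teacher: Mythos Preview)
The statement you are attempting is a \emph{conjecture} in the paper, not a theorem; the paper does not prove it. What the paper does prove is the subsequent Proposition: if Conjecture~\ref{Cor_edim_equality} holds for $2$-connected graphs, then it holds in general. Your proposal correctly identifies this structure --- the ``if'' direction is immediate from earlier results, and the ``only if'' direction for $2$-connected graphs (your inductive ear-decomposition plan) is genuinely open, as you yourself note in the last paragraph. So there is no discrepancy on the hard part: neither you nor the paper has a proof there.

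Where a comparison is meaningful is the reduction to $2$-connected graphs. Your sketch and the paper's Proposition follow the same outline (Lemma~\ref{Cor_attained} forces all non-trivial blocks to share a vertex and every non-cycle block to be one of the listed extremal $\Theta$-graphs), but your ``short further argument'' for ruling out a $\Theta$-block with something attached is where the paper is more concrete. The key device you are missing is Lemma~\ref{Lemma_edgeGenerators}: in each extremal $\Theta$-block $G_i$ one can choose an edge metric generator $S_i'$ of size $3$ \emph{containing the shared cut vertex $v$}. Taking $S_i=S_i'\setminus\{v\}$ in each $\Theta$-block and a single neighbour of $v$ in each cycle block, the paper builds a set $S$ that distinguishes all pairs within a block (any pair separated by $v$ in $G_i$ is separated in $G$ by every vertex of $S$ outside $G_i$), then shows that undistinguished cross-block pairs lie on short paths from $v$ and are killed by adding at most $q-1$ further vertices. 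This yields $\mathrm{edim}(G)\le 2c(G)-p-1$, strictly below $2c(G)-1$ whenever at least one block is a non-cycle ($p\ge 1$). Your proposal gestures at ``a combining estimate'' here; the paper's argument is exactly that estimate, and Lemma~\ref{Lemma_edgeGenerators} is what makes the gluing at $v$ possible.
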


Similarly as with Conjectures \ref{Con_dim_leaves} and \ref{Con_edim_leaves},
we show in the next proposition that the above two conjectures reduce to the
same problem on $2$-connected graphs. In order to do so we will use a result
from \cite{SedSkreLeaflessCacti}, which states that $c(G)=c(G_{1}%
)+\cdots+c(G_{q})$ where $G_{1},\ldots,G_{q}$ is the complete list of blocks
of $G$.

\begin{proposition}
If Conjecture \ref{Cor_dim_equality} (resp. Conjecture \ref{Cor_edim_equality}%
) holds for $2$-connected graphs, then it holds in general.
\end{proposition}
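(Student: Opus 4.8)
The plan is to argue by contraposition, or rather to reduce the general case to the $2$-connected case in the standard way using the block decomposition of $G$ together with Lemma \ref{Cor_attained}. Suppose the conjectures are known for $2$-connected graphs, and let $G$ be a connected graph with $\delta(G)\geq 2$. We must show the two-way implication: if $\mathrm{dim}(G)=2c(G)-1$ then $G$ is of the prescribed form, and conversely; and likewise for $\mathrm{edim}$. The converse direction is already available: for daisy graphs the equality $\mathrm{dim}(G)=2c(G)-1$ (daisy without odd petals) and $\mathrm{edim}(G)=2c(G)-1$ (any daisy) is the cited result from \cite{SedSkreLeaflessCacti}, and for the $\Theta$-graphs $\Theta_{p,p,p},\Theta_{p,p,p+2}$ (resp. $\Theta_{1,2,2}$ and $\Theta_{p,p,q}$ with $2\le p\le 3$, $p\le q\le p+2$) it is exactly the content of the corollaries established in Sections 2 and 3 of this paper. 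So the whole work is in the forward direction.

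For the forward direction, assume $\mathrm{dim}(G)=2c(G)-1$ (the edge case is identical in structure). Let $G_1,\dots,G_q$ be the blocks of $G$. Since $\delta(G)\geq 2$, $G$ has no leaf-block that is a single edge, so every block is either a cycle or a $2$-connected graph on at least three vertices that is not a cycle; in particular every non-trivial block has cyclomatic number at least $1$, and $K_2$-blocks do not occur. By the additivity $c(G)=c(G_1)+\cdots+c(G_q)$ from \cite{SedSkreLeaflessCacti}, and since $G\ne C_n$ forces at least one block with $c(G_i)\geq 1$ while equality $\mathrm{dim}(G)=2c(G)-1$ together with Lemma \ref{Cor_attained} rules out having two vertex-disjoint non-trivial blocks, all the cyclomatic content of $G$ must be concentrated in a single non-trivial block, say $G_1$; every other block is $K_2$, which is impossible under $\delta(G)\geq 2$ unless $q=1$. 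Hence $G$ is $2$-connected, i.e.\ $G=G_1$. Now Lemma \ref{Cor_attained} applied to the (unique) block $G_1$ also tells us that $\mathrm{dim}(G_1)=2c(G_1)-1$: it cannot be strictly smaller. Invoking the assumed characterization for $2$-connected graphs, $G=G_1$ is a daisy graph without odd petals, or $\Theta_{p,p,p}$, or $\Theta_{p,p,p+2}$, which is exactly the desired conclusion. The same argument with $\mathrm{edim}$ in place of $\mathrm{dim}$ and the $\mathrm{edim}$-clause of Lemma \ref{Cor_attained} gives Conjecture \ref{Cor_edim_equality}.

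The one point that deserves care — and which I expect to be the main (though mild) obstacle — is the bookkeeping that forces $q=1$: one needs to be sure that under $\delta(G)\geq 2$ a connected graph cannot have a $K_2$ block at all, so that "every block other than $G_1$ is trivial" really does collapse to "there is only one block." A pendant edge would create a degree-$1$ vertex, so indeed no block is a $K_2$; and a cycle block meeting the rest of $G$ in a single cut vertex is itself a non-trivial block, so if such a cycle coexisted with the non-trivial block $G_1$ we would have two vertex-disjoint non-trivial blocks, contradicting Lemma \ref{Cor_attained}. The rest is the routine translation between the additivity of $c$ and the additivity-type behaviour of the metric dimension bound already packaged in Lemma \ref{Cor_attained}. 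I would also note explicitly that the reduction is genuinely an equivalence: the converse (daisy / extremal $\Theta$ implies equality) is not part of the hypothesis on $2$-connected graphs but is supplied independently by the cited cactus result and by the results of this paper, so the "if and only if" in Conjectures \ref{Cor_dim_equality} and \ref{Cor_edim_equality} is fully covered.
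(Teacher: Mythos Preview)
Your argument has a genuine gap that makes the proof fail. You claim that Lemma~\ref{Cor_attained} forces all the cyclomatic content into a single block and hence that $G$ must be $2$-connected, but Lemma~\ref{Cor_attained} only rules out two \emph{vertex-disjoint} non-trivial blocks. Two distinct blocks may share a cut vertex, and in that case they are not vertex-disjoint, so the lemma says nothing. In particular, a daisy graph has many non-trivial (cycle) blocks, all meeting in the central vertex; none of them are pairwise vertex-disjoint, Lemma~\ref{Cor_attained} does not apply, and indeed such graphs \emph{do} satisfy $\mathrm{dim}(G)=2c(G)-1$. Your conclusion ``hence $G$ is $2$-connected'' therefore directly contradicts the very statement you are trying to prove, since daisy graphs are on the extremal list but are not $2$-connected. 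The sentence ``if such a cycle coexisted with the non-trivial block $G_1$ we would have two vertex-disjoint non-trivial blocks'' is simply false when the cycle and $G_1$ meet at the cut vertex.

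What the paper actually does is treat precisely the configuration your argument overlooks: $G$ not $2$-connected, all blocks sharing a common vertex $v$, each non-cycle block a vertex-extremal (resp.\ edge-extremal) $\Theta$-graph. For this case one must \emph{construct} a metric generator of size strictly less than $2c(G)-1$. The key tool is Lemma~\ref{Lemma_vertexGenerators} (resp.\ Lemma~\ref{Lemma_edgeGenerators}), which guarantees that each extremal $\Theta$-block admits a generator of size $3$ containing the shared vertex $v$; removing $v$ from each and taking a single neighbour of $v$ in each cycle block gives a set that resolves all pairs inside a block, and a further careful analysis shows that at most $q-1$ extra vertices suffice to resolve pairs straddling two blocks. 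The resulting count is $2c(G)-p-1<2c(G)-1$ whenever there is at least one non-cycle block ($p\ge 1$); the case $p=0$ is the cactus case handled in \cite{SedSkreLeaflessCacti}. None of this construction appears in your proposal, and it cannot be bypassed by Lemma~\ref{Cor_attained} alone.
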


\begin{proof}
We say that $G$ is \emph{vertex extremal}, if $G=\Theta_{p,p,p}$ or
$G=\Theta_{p,p,p+2}.$ We say $G$ is \emph{edge extremal} if $G=\Theta_{1,2,2}$
or $G=\Theta_{p,p,q}$ for $2\leq p\leq3$ and $p\leq q\leq p+2.$ Now, let $G$
be a graph with $\delta(G)\geq2$ which is not $2$-connected. According to
Lemma \ref{Cor_attained}, the equality $\mathrm{dim}(G)=2c(G)-1$ (resp.
$\mathrm{edim}(G)=2c(G)-1$) may hold only when every non-trivial block of $G$
distinct from a cycle is vertex extremal (resp. edge extremal) and all blocks
of $G$ share a vertex.

We shall now construct a vertex (resp. an edge) metric generator in such a
graph whose size is smaller than $2c(G)-1,$ which is sufficient to prove the
claim. Let $v$ be a vertex of $G$ shared by all blocks in $G.$ Let us assume
$G_{1},\ldots,G_{q}$ are all non-trivial blocks in $G$ denoted so that $G_{i}$
is a cycle whenever $i>p.$ According to Lemma \ref{Lemma_vertexGenerators}
(resp. Lemma \ref{Lemma_edgeGenerators}), for $1\leq i\leq p$ there is a
vertex (resp. an edge) metric generator $S_{i}^{\prime}$ in $G_{i}$ such that
$v\in S_{i}^{\prime},$ and for such $i$ let us denote $S_{i}=S_{i}^{\prime
}\backslash\{v\}$. For $i>p,$ let $S_{i}$ consist of a single vertex which is
a neighbor of $v$ in $G_{i}.$ Now, let $S=S_{1}\cup\cdots\cup S_{q}.$ Observe
that the set $S$ distinguishes in $G$ all pairs of vertices (resp. edges)
which belong to the same block of $G$, this follows from the fact that a pair
of vertices (resp. edges) which is distinguished by $v$ in $G_{i}$ is in $G$
distinguished by every vertex $s\in S\backslash V(G_{i}).$

By above, a pair of vertices (resp. edges) $x$ and $x^{\prime}$ is not
distinguished by $S$ in $G$ only if $x$ belongs to $G_{i}$ and $x^{\prime}$
belongs to $G_{j}$, $i\neq j$. In such a case we say $G_{i}$ and $G_{j}$ are
\emph{critically incident}. So let $G_{i}$ and $G_{j}$ are critically incident
with $x\in V(G_{i})$, $x^{\prime}\in V(G_{j})$ such that $x$ and $x^{\prime}$
are not distinguished by $S$. Let further $s\in S_{i}$ and $s^{\prime}\in
S_{j}$. Then $d(s,x)=d(s,x^{\prime})$ and $d(s^{\prime},x)=d(s^{\prime
},x^{\prime})$. Denote $a=d(s,v)$, $b=d(v,x)$, $c=d(s^{\prime},v)$ and
$d=d(v,x^{\prime})$. Then
\begin{align*}
(a+d)+(c+b)  &  =d(s,x^{\prime})+d(s^{\prime},x)=d(s,x)+d(s^{\prime}%
,x^{\prime})\\
&  \leq d(s,v)+d(v,x)+d(s^{\prime},v)+d(v,x^{\prime})=a+b+c+d,
\end{align*}
and so a shortest path from $x$ (resp. $x^{\prime}$) to every vertex from
$S_{i}$ (resp. $S_{j}$) leads through $v$. Hence $b=d$ and $a=c$.

If $b>1$ then let $x_{1}$ (resp. $x_{1}^{\prime}$) be a neighbor of $x$ (resp.
$x^{\prime}$) on a shortest path from $v$ to $x$ (resp. $x^{\prime}$). Then
for every $s^{*}\in S_{i}\cup S_{j}$ we have $d(s^{*},x_{1})=d(s^{*}%
,x_{1}^{\prime})=a+b-1$, so $x_{1}$ and $x_{1}^{\prime}$ are not distinguished
by $S$ as well.

Finally, let $x_{2}$ and $x_{2}^{\prime}$ be another pair of vertices which is
not distinguished by $S$, $x_{2}\in V(G_{i})$ and $x_{2}^{\prime}\in V(G_{j}%
)$, and let $d(v,x_{2})=d(v,x)$. Then $x$ and $x_{2}$ are not distinguished by
$S_{i}$, which means that $x_{2}=x$ and analogously $x_{2}^{\prime}=x^{\prime
}$.

Thus vertices $y\in V(G_{i})$, for which there exists $y^{\prime}\in V(G_{j})$
such that $y,y^{\prime}$ is a pair not distinguished by $S$, form a path
starting at a neighbor of $v$. Denote this neighbor by $z$. If there is $k\ne
j$ such that $G_{i}$ and $G_{k}$ are critically incident as well, then again
vertices $y\in V(G_{i})$, for which there exists $y^{*}\in V(G_{k})$ such that
$y,y^{*}$ is a pair not distinguished by $S$, form a path starting at $z$. So
it is sufficient to add $z$ to $S_{i}$ and all pairs of vertices from $G_{i}$
and $G_{k}$ (as well as from $G_{i}$ and $G_{j}$) will be distinguished.

We conclude that it is sufficient to introduce to $S$ at most $q-1$ vertices,
and all pairs $x$ and $x^{\prime}$ from distinct blocks will also be
distinguished by $S.$ Consequently, since $\left\vert S_{i}\right\vert
=\mathrm{dim}(G_{i})-1$ we have%
\begin{align*}
\mathrm{dim}(G)  &  \leq\sum_{i=1}^{q}(\mathrm{dim}(G_{i})-1)+q-1=\sum
_{i=1}^{q}\mathrm{dim}(G_{i})-1\\
&  =\sum_{i=1}^{p}(2c(G_{i})-1)+\sum_{i=p+1}^{q}2c(G_{i})-1=2c(G)-p-1
\end{align*}
which is obviously smaller than $2c(G)-1$ for $p\geq1.$ If $p=0,$ then $G$ is
a cactus graph and for cacti it was already established that the bound is
attained only for daisy graphs without odd petals. The proof for
$\mathrm{edim}(G)$ is analogous.
\end{proof}

\bigskip

\bigskip\noindent\textbf{Acknowledgments.}~~The first author aknowledges
partial support by Slovak research grants VEGA 1/0567/22, VEGA 1/0206/20,
APVV-19-0308, APVV--17--0428. The second author acknowledges the support of
Project KK.01.1.1.02.0027, a project co-financed by the Croatian Government
and the European Union through the European Regional Development Fund - the
Competitiveness and Cohesion Operational Programme. All authors acknowledge
partial support of the Slovenian research agency ARRS program\ P1-0383 and
ARRS project J1-1692.

\end{document}